\newcommand{\R}{\mathbb{R}}
\newcommand{\C}{\mathbb{C}}
\newtheorem{theorem}{Theorem}
\newtheorem{lemma}[theorem]{Lemma}
\newtheorem{proposition}[theorem]{Proposition}
\newtheorem{definition}{Definition}
\begin{document}

\title{Curves and envelopes that bound the spectrum of a matrix}
\author{G{\"o}ran Bergqvist\\
      Department of Mathematics, Link{\"o}ping University, \\
      SE-581 83 Link{\"o}ping, Sweden \\
      gober@mai.liu.se}
\date{}

\maketitle

\begin{abstract}

A generalization of the method developed by Adam, Psarrakos  and Tsatsomeros to find
inequalities for the eigenvalues of a complex matrix $A$ using knowledge of the largest 
eigenvalues of its Hermitian part $H(A)$ is presented. 
The numerical range or field of values of $A$ can be constructed as the intersection of half-planes
determined by the largest eigenvalue of $H(e^{i\theta}A)$. Adam, Psarrakos  and Tsatsomeros showed 
that using the two largest eigenvalues of $H(A)$, the eigenvalues of $A$ satisfy a cubic inequality and
the envelope of such cubic curves defines a region in the complex plane smaller than the 
numerical range but still containing the spectrum of $A$.
Here it is shown how using the three largest eigenvalues of $H(A)$ or more, one obtains new 
inequalities for the eigenvalues of $A$ and new envelope-type regions containing the spectrum of $A$. 
\end{abstract}

\vskip.5cm
\noindent
Keywords: {\it spectrum localization, eigenvalue inequalities, envelope, numerical range}

\vskip.2cm
\noindent
AMS classification codes: 15A18, 15A42, 15A60, 65F15 


\section{Introduction}

In this paper, we denote by ${\cal  M}_{n,k}(\C)$ and ${\cal  M}_{n,k}(\R)$ the spaces of complex 
and real $n\times k$ matrices
respectively;  ${\cal  M}_{n}(\C)$ and ${\cal  M}_{n}(\R)$ stand for $k=n$.
The spectrum $\sigma(A)$
of a matrix $A\in {\cal  M}_{n}(\C)$ is known to be located in its numerical range or field of values 
$F(A)=\{\bold x^*A\bold x \in \C ;\ \bold x\in\C^n, ||\bold x||_2=1\}$. 
The spectrum of $A$ is also located to the left of the vertical line $Re(z)=\delta_1$ in the complex plane, where
$\delta_1$ is the largest eigenvalue of the Hermitian part $H(A)=\frac{1}{2}(A+A^*)$ of $A$. Here 
$A^*$ denotes the Hermitian conjugate of $A$. Also, $S(A)=\frac{1}{2}(A-A^*)$ 
denotes the skew-Hermitian part of $A$, so $A=H(A)+S(A)$.

\smallskip

Clearly $F(A)=e^{-i\theta}F(e^{i\theta}A)$ for any $\theta\in [0,2\pi[$, so 
$e^{i\theta}F(A)$ is located to the left of the vertical line $Re(z)=\lambda_{max}(H(e^{i\theta}A))$, and
rotating this line by $e^{-i\theta}$ we get a new line that bounds $\sigma(A)$. In fact, \cite{HJ2,J},
$F(A)$ is obtained exactly as the connected, compact and convex region defined by the envelope of 
all such lines. In the first subfigure of Figure \ref{fig1} we have illustrated this for the Toeplitz matrix
\begin{equation}\label{eq:Toep}
A=\begin{pmatrix} 1 & 1 & 0 & i \\  2 & 1 & 1 & 0 \\  3 & 2 & 1 & 1 \\4 & 3 & 2 & 1 \\\end{pmatrix}\ ,
\end{equation}
by plotting these lines for $\theta= 2\pi m/120, m=0, \dots ,119$. 
 The eigenvalues of $A$ are marked by small boxes in the figure.

In \cite{AT} Adam and Tsatsomeros showed how one can use the two largest eigenvalues
$\delta_1$ and $\delta_2$ of $H(A)$ and the eigenvector $\bold u_1$ of $H(A)$ corresponding
to $\delta_1$, to obtain an improved inequality for $\sigma(A)$. Let 
$\alpha=Im(\bold u_1^*S(A)\bold u_1)$ and $K_1=||S(A)\bold u_1||_2^2-\alpha^2\ge 0$. Then
they proved that any $\lambda\in\sigma(A)$ satisfies
\begin{equation}\label{eq:cubic}
|\lambda-(\delta_1+i\alpha)|^2(Re(\lambda)-\delta_2)\le K_1(\delta_1-Re(\lambda))\ .
\end{equation}
With equality in (\ref{eq:cubic}) we have a curve $\Gamma_1(A)$ that bounds $\sigma(A)$ and is of 
degree 3 in the real and imaginary parts of $\lambda$. 
Applied to $H(e^{i\theta}A)$ one can repeat the argument above and obtain
rotated cubic curves that bound $\sigma(A)$. The envelope of such curves was studied
extensively by Psarrakos and Tsatsomeros \cite{PT1,PT2} and they showed that it bounds a region
$\mathcal{E}_1(A)$ that contains $\sigma(A)$ and is compact, but which is not always convex or connected.
In the second subfigure of Figure \ref{fig1} we show their image \cite{PT1} 
of $\mathcal{E}_1(A)$ for the Toeplitz matrix $A$ in (\ref{eq:Toep}).
Again we used 120 curves, with $\theta= 2\pi m/120, m=0, \dots ,119$, for the plot.

\medskip

The aim of this paper is to generalize the results of Adam, Psarrakos and Tsatsomeros by
using the $k$ largest eigenvalues of $H(A)$ to obtain new curves $\Gamma_k(A)$ that bound $\sigma(A)$. 
As a preview of our results, we show in the third subfigure of Figure \ref{fig1} the region $\mathcal{E}_2(A)$ 
obtained from the envelope of curves when the three largest eigenvalues of $H(A)$ are utilized for the 
Toeplitz matrix $A$ in (\ref{eq:Toep}). Also here 120 curves are used to construct the figure.

\begin{figure}
\includepdf[pages=1,pagecommand={},offset=-55mm 80mm,
trim = 25mm 120mm 75mm 20mm,clip,width=55mm]{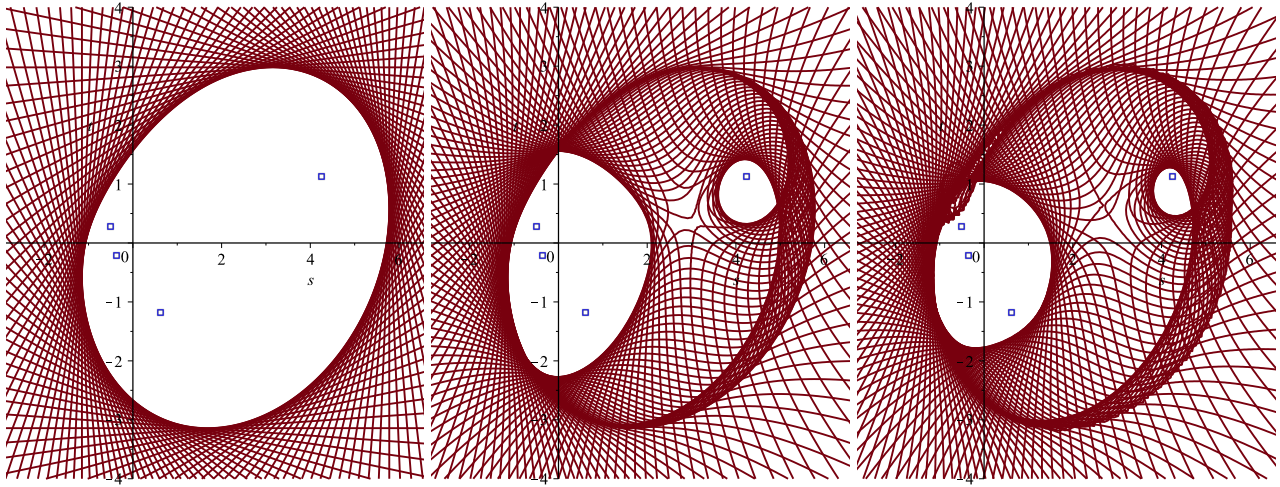}
\includepdf[pages=2,pagecommand={},offset=1mm 80mm,
trim = 25mm 120mm 75mm 20mm,clip,width=55mm]{EVFig.pdf}
\includepdf[pages=3,pagecommand={},offset=57mm 80mm,
trim = 25mm 120mm 75mm 20mm,clip,width=55mm]{EVFig.pdf}
\vskip52mm
\caption{\label{fig1} The numerical range $F(A)$ (left), the regions $\mathcal{E}_1(A)$ (middle)  and
 $\mathcal{E}_2(A)$ (right)  of $A$ in (\ref{eq:Toep}).}
\end{figure}

\medskip

In Section 2 we derive the main inequality for the eigenvalues of a matrix, with respect to
 the largest eigenvalues and corresponding eigenvectors of its Hermitian part.
We also state a more explicit formulation for the case of three known eigenvalues,
and present some illustrations and analyze properties of the curves that
bound the spectrum. In Section 3 we analyze explicitly some cases where the
curves have special properties. Then, in Section 4, we demonstrate how an envelope
of such curves encloses a region which is inside the numerical range and contains the spectrum.
We compare this region with the region obtained for the case of two known eigenvalues
presented by Psarrakos and Tsatsomeros.
 

\section{Eigenvalue inequalities and spectrum bounding curves}

Let $A\in {\cal  M}_n(\C)$ and let  $\delta_1\ge \dots \ge \delta_n$ be the eigenvalues of 
$H(A)$ with $\bold{u}_1,\dots,\bold{u}_n$ the corresponding normalized orthogonal eigenvectors. 
Consider the unitary matrix $U\in {\cal  M}_n(\C)$ with columns $\bold{u}_j$, the diagonal matrix
 $\Delta=diag(\delta_{1}, \dots ,\delta_{n})\in {\cal  M}_n(\R)$ with diagonal elements  
$\delta_{1}, \dots ,\delta_{n}$, and formulate the Hermitian part of $A$ as
\begin{equation}\label{eq:21}
H(A)=U\Delta U^* \Leftrightarrow U^*H(A)U=\Delta=
\begin{pmatrix} \Delta_k  &  0 \\ 0 &  \tilde\Delta_k \end{pmatrix},
\end{equation}
where $\Delta_k=diag(\delta_{1}, \dots ,\delta_{k})\in {\cal  M}_{k}(\R)$ and 
$\tilde\Delta_k=diag(\delta_{k+1}, \dots ,\delta_{n})\in {\cal  M}_{n-k}(\R)$. Define the skew-Hermitian matrix
\begin{equation}\label{eq:22}
Y=U^*S(A)U=\begin{pmatrix} Y_k &  -V_k^*  \\  V_k& \tilde Y_k  \end{pmatrix} ,
\end{equation}
where $Y_k\in {\cal  M}_k(\C)$ and $\tilde Y_k\in {\cal  M}_{n-k}(\C)$ are skew-Hermitian, and 
$V_k\in {\cal  M}_{n-k,k}(\C)$.
Now, let the upper principal $k\times k$ submatrix of $U^*(A-\lambda I_n)U$ be 
\begin{equation}\label{eq:23}
W_k=\Delta_k+Y_k-\lambda I_k \ ,
\end{equation} 
where $\lambda=s+it$ is an eigenvalue of $A$. Obviously, combining (\ref{eq:21}) - (\ref{eq:23})
for the Hermitian part of $W_k$, we have $H(W_k)=\frac{1}{2}(W_k+W_k^*)=\Delta_k-sI_k\,$. 
\begin{lemma}
If $\lambda\in\sigma(A)\smallsetminus\sigma(\Delta_k+Y_k)$, then $H(W_k^{-1})$ is not negative definite.
\end{lemma}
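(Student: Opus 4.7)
The plan is to exploit the fact that inverting $W_k$ preserves the inertia of its Hermitian part, so the desired conclusion reduces to the elementary observation that $H(W_k)=\Delta_k-sI_k$ cannot be negative definite for any eigenvalue $\lambda=s+it$ of $A$.

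First I would note that the hypothesis $\lambda\notin\sigma(\Delta_k+Y_k)$ is used only to guarantee that $W_k=\Delta_k+Y_k-\lambda I_k$ is invertible, so that $W_k^{-1}$ exists. I would then establish the $*$-congruence identity
\begin{equation*}
H(W_k^{-1})=W_k^{-*}\,H(W_k)\,W_k^{-1},
\end{equation*}
which follows in one line from $W_k^{-1}+W_k^{-*}=W_k^{-*}(W_k+W_k^*)W_k^{-1}$. By Sylvester's law of inertia the two Hermitian matrices $H(W_k^{-1})$ and $H(W_k)$ then have identical signatures; in particular, one is negative definite if and only if the other is.

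From (\ref{eq:23}) and the skew-Hermiticity of $Y_k$, I would compute $H(W_k)=\Delta_k-sI_k$, a real diagonal matrix with eigenvalues $\delta_1-s,\ldots,\delta_k-s$. This is negative definite precisely when $s>\delta_1$. However, since $\lambda\in\sigma(A)\subseteq F(A)$ and $F(A)$ lies to the left of the vertical line $\Re(z)=\delta_1$ recalled in the introduction, we automatically have $s\le\delta_1$. Hence $\delta_1-s\ge 0$ is a non-negative eigenvalue of $H(W_k)$, so $H(W_k)$ is not negative definite, and the same is true of $H(W_k^{-1})$ by the congruence above.

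I do not anticipate any real obstacle here: the only delicate points are verifying the congruence identity and reading ``not negative definite'' as ``possesses at least one non-negative eigenvalue''. The lemma is best viewed as the inertia-preservation observation that converts the elementary a priori bound $s\le\delta_1$ into a structural statement about $W_k^{-1}$, and this structural statement is presumably what will drive the derivation of the sharper eigenvalue inequalities in the sequel.
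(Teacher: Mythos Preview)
Your proof is correct and follows essentially the same line as the paper's: both argue that $s\le\delta_1$ forces $H(W_k)=\Delta_k-sI_k$ to have the non-negative eigenvalue $\delta_1-s$, and then transfer this to $H(W_k^{-1})$. The only cosmetic difference is that the paper cites \cite{M} for the equivalence ``$H(W_k)$ not negative definite $\Leftrightarrow$ $H(W_k^{-1})$ not negative definite'', whereas you supply the one-line congruence $H(W_k^{-1})=W_k^{-*}H(W_k)W_k^{-1}$ and invoke Sylvester directly, which is a self-contained and arguably cleaner justification of the same fact.
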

\begin{proof}
$W_k^{-1}$ exists since $\lambda\notin\sigma(\Delta_k+Y_k)$. Since $s\le\delta_1$ for $\lambda\in\sigma(A)$,  $H(W_k)=diag(\delta_1-s, \dots , \delta_k-s)$ has at least one
non-negative eigenvalue $\delta_1-s$ and is therefore not negative definite, which is equivalent to
$H(W_k^{-1})$  not being negative definite  \cite{M}.
\end{proof}

Recall that the adjugate $adj(A)\in {\cal  M}_{n}(\C)$ of $A\in {\cal  M}_{n}(\C)$ is the matrix
whose elements are minors of $A$, and satisfies $adj(A)A=(\det A) I_n$.
The following is the main result of the paper.

\begin{theorem}\label{NH}
Let $A\in {\cal  M}_n(\C)$ and $\lambda $ be an eigenvalue of $A$. Let
   $\delta_1\ge \dots \ge \delta_n$ be the eigenvalues of  the
   Hermitian part of $A$. Then
\begin{equation}\label{eq:main}
|\det W_k|^2 (Re(\lambda)-\delta_{k+1}) \le (\sigma_1(V_k))^2
\lambda_{max}(H(\det W_k adj(W_k^*)))\ ,
\end{equation}
where  $V_k$ and  $W_k$ are defined in (\ref{eq:22}) and (\ref{eq:23}) respectively, 
$adj(W_k^*)$ is the adjugate of $W_k^*$, $\lambda_{max}(H(\det W_k adj(W_k^*)))\ge 0$ 
is the largest eigenvalue of the Hermitian part of  $\det W_k adj(W_k^*)$, and 
$\sigma_1(V_k)$ is the largest singular value of $V_k$. 
\end{theorem}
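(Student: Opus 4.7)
The plan is to mimic the $k=1$ argument that gave (\ref{eq:cubic}) by exploiting the block structure of $U^*AU$ in (\ref{eq:21})--(\ref{eq:22}). Take an eigenvector $\mathbf{x}\in\C^n$ of $A$ with $A\mathbf{x}=\lambda\mathbf{x}$ and rotate it via $\mathbf{y}=U^*\mathbf{x}=(\mathbf{y}_1^\top,\mathbf{y}_2^\top)^\top$, with $\mathbf{y}_1\in\C^k$ and $\mathbf{y}_2\in\C^{n-k}$. Writing out $U^*(A-\lambda I_n)U\mathbf{y}=0$ block-by-block yields
\[
W_k\mathbf{y}_1 = V_k^*\mathbf{y}_2,\qquad V_k\mathbf{y}_1 + (\tilde\Delta_k+\tilde Y_k-\lambda I_{n-k})\mathbf{y}_2=0.
\]
Assuming first that $\lambda\notin\sigma(\Delta_k+Y_k)$, so that $W_k$ is invertible, I would eliminate $\mathbf{y}_1 = W_k^{-1}V_k^*\mathbf{y}_2$ from the second equation, take the Hermitian form against $\mathbf{y}_2$, and pass to real parts. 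Since $\tilde Y_k$ is skew-Hermitian (its quadratic form is purely imaginary) and $\mathbf{y}_2^*\tilde\Delta_k\mathbf{y}_2\le \delta_{k+1}\|\mathbf{y}_2\|^2$, this produces
\[
(V_k^*\mathbf{y}_2)^*\,H(W_k^{-1})\,(V_k^*\mathbf{y}_2)\ \ge\ (\Re(\lambda)-\delta_{k+1})\,\|\mathbf{y}_2\|^2.
\]

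The key algebraic step---and the one I expect to be the main obstacle---is to recast $H(W_k^{-1})$ in terms of a matrix whose entries are polynomial in those of $W_k$. Using $adj(W_k^*)=adj(W_k)^*$ together with $adj(W_k)=\det(W_k)\,W_k^{-1}$, a short manipulation gives the identity
\[
H\!\bigl(\det W_k\cdot adj(W_k^*)\bigr)\ =\ |\det W_k|^2\,H(W_k^{-1}),
\]
which I would verify by first observing $\det W_k\cdot adj(W_k^*) = |\det W_k|^2 (W_k^{-1})^*$ and then symmetrising. This is precisely what makes the right-hand side of (\ref{eq:main}) polynomial in $\Re(\lambda)$ and $\Im(\lambda)$, and is responsible for the curves $\Gamma_k(A)$ being algebraic.

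For the last step I would produce a matching upper bound for $(V_k^*\mathbf{y}_2)^*H(W_k^{-1})(V_k^*\mathbf{y}_2)$. For Hermitian $M=H(W_k^{-1})$ one has $\mathbf{z}^*M\mathbf{z}\le \lambda_{max}(M)\|\mathbf{z}\|^2$; applying this with $\mathbf{z}=V_k^*\mathbf{y}_2$ and then $\|V_k^*\mathbf{y}_2\|^2\le \sigma_1(V_k)^2\|\mathbf{y}_2\|^2$ gives $(V_k^*\mathbf{y}_2)^*H(W_k^{-1})(V_k^*\mathbf{y}_2) \le \lambda_{max}(H(W_k^{-1}))\,\sigma_1(V_k)^2\|\mathbf{y}_2\|^2$. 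Chaining these two inequalities requires $\lambda_{max}(H(W_k^{-1}))\ge 0$, which is the content of the preceding Lemma; this simultaneously certifies the non-negativity of $\lambda_{max}(H(\det W_k\cdot adj(W_k^*)))$ claimed in the theorem. Combining with the lower bound and dividing by $\|\mathbf{y}_2\|^2\ne 0$ (the case $\mathbf{y}_2=0$ would force $\mathbf{y}_1=W_k^{-1}V_k^*\mathbf{y}_2=0$, contradicting $\mathbf{x}\ne 0$), then multiplying through by $|\det W_k|^2$ and applying the adjugate identity, yields (\ref{eq:main}). Finally, the excluded case $\lambda\in\sigma(\Delta_k+Y_k)$ is trivial: $\det W_k=0$ makes both sides of (\ref{eq:main}) vanish.
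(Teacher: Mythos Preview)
Your proposal is correct and follows essentially the same route as the paper. The only cosmetic difference is that the paper phrases the elimination step through the Schur complement $\tilde W_k=\tilde\Delta_k+\tilde Y_k-\lambda I_{n-k}+V_kW_k^{-1}V_k^*$ being singular and then picks a unit vector $\mathbf{x}$ with $\mathbf{x}^*H(\tilde W_k)\mathbf{x}=0$, whereas you work directly with the eigenvector components $\mathbf{y}_1,\mathbf{y}_2$; your normalized $\mathbf{y}_2$ is exactly such an $\mathbf{x}$, so the two arguments coincide line by line. One small imprecision: the right-hand side of (\ref{eq:main}) is not itself polynomial in $\Re(\lambda),\Im(\lambda)$ because of the $\lambda_{\max}$, only the \emph{entries} of $H(\det W_k\cdot adj(W_k^*))$ are; the algebraicity of $\Gamma_k(A)$ comes after eliminating $\lambda_{\max}$ via the characteristic polynomial, as the paper does for $k=2$ in (\ref{eq:pol}).
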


\begin{proof}

We adopt the ideas from the proof of the case $k=1$ presented by Adam and Tsatsomeros \cite{AT}.
Let $\lambda = s+it$ be an eigenvalue of $A$. This means that $A-\lambda I_n$ and
\begin{equation*}
U^*(A-\lambda I_n)U=\begin{pmatrix} W_k &  -V_k^*  \\  V_k & \tilde\Delta_k+\tilde Y_k-\lambda I_{n-k}  \end{pmatrix} ,
\end{equation*}
are singular. If $W_k$ is singular, then $\det W_k=0$ and the statement of the theorem is trivial.
For the case that $W_k$ is nonsingular, the Schur complement  
$\tilde W_k=\tilde\Delta_k+\tilde Y_k-\lambda I_{n-k}+V_k W_k^{-1}V_k^*$ of $W_k$ 
must be singular, since $0=\det (U^*(A-\lambda I_n)U)=\det W_k\det \tilde W_k$.
Hence $0\in\sigma(\tilde W_k)\subset F(\tilde W_k)$, which implies 
$0\in Re( F(\tilde W_k))=F(H(\tilde W_k))$, \cite{HJ2}. Consequently there exists a unit vector
$\bold x\in \C^{n-k}$ such that
\begin{equation}\label{eq:xHx}
\bold x^* H(\tilde W_k)\bold x =0\ .
\end{equation}
Since
\begin{equation*}
H(\tilde W_k)=\frac{1}{2}(\tilde W_k+\tilde W_k^*)=\tilde\Delta_k-sI_{n-k}+\frac{1}{2}V_k(W_k^{-1}+
(W_k^{-1})^*)V_k^*=\tilde\Delta_k-sI_{n-k}+V_kH(W_k^{-1})V_k^* \, ,
\end{equation*}
from (\ref{eq:xHx}) we get  
\begin{equation}\label{eq:xdx}
0=\bold x^*\tilde\Delta_k \bold x-s \bold x^* I_{n-k}\bold x+
\bold x^*V_kH(W_k^{-1})V_k^*\bold x=\bold x^*\tilde\Delta_k \bold x-s +
(V_k^*\bold x )^*H(W_k^{-1})(V_k^*\bold x)\ .
\end{equation}
Further, using the unit vector $\bold x$,  the largest eigenvalues $\delta_{k+1}$ and 
$\lambda_{max}(H(W_k^{-1}))$ of the Hermitian matrices $\tilde\Delta_k$ and
$H(W_k^{-1})$, respectively, and the largest singular value $\sigma_1(V_k^*)$ of $V_k^*$, we obtain 
\begin{equation}\label{eq:xd2x}
\bold x^*\tilde\Delta_k \bold x\le \delta_{k+1}||\bold x||_2^2=\delta_{k+1}\ ,
\end{equation}
\begin{equation}\label{eq:xh2x}
(V_k^*\bold x )^*H(W_k^{-1})(V_k^*\bold x) \le \lambda_{max}(H(W_k^{-1}))||V_k^*\bold x||_2^2 \ ,
\end{equation}
and
\begin{equation}\label{eq:Vx}
||V_k^*\bold x||_2^2\le (\sigma_1(V_k^*))^2 ||\bold x||_2^2= (\sigma_1(V_k^*))^2=(\sigma_1(V_k))^2 \ .
\end{equation}
Also, according to Lemma 1 for the largest eigenvalue of $H(W_k^{-1})$ we have $\lambda_{max}(H(W_k^{-1}))\ge 0$,
which together with (\ref{eq:Vx}) gives
\begin{equation}\label{eq:xh3x}
\lambda_{max}(H(W_k^{-1}))||V_k^*\bold x||_2^2\le \lambda_{max}(H(W_k^{-1}))(\sigma_1(V_k))^2 \ .
\end{equation}
Using  the inequalities (\ref{eq:xd2x}), (\ref{eq:xh2x})  and (\ref{eq:xh3x}) in (\ref{eq:xdx}) we get  
\begin{equation}\label{eq:A}
0\le \delta_{k+1}-s +\lambda_{max}(H(W_k^{-1}))(\sigma_1(V_k))^2   \ .
\end{equation}
The inequality (\ref{eq:A}) is our main result in the case of a nonsingular $W_k$. In order to formulate 
(\ref{eq:main}), which is valid also for a singular $W_k$, notice that for $H(W_k^{-1})$ we have
\begin{equation*}
H(W_k^{-1})=\frac{1}{2} ( W_k^{-1}+(W_k^{-1})^*)=\frac{1}{2}(\frac{1}{\det W_k}adj(W_k)+
\frac{1}{{\overline{\det W_k} }} adj(W_k^*))=\frac{M_k}{|\det W_k|^2}\ ,
\end{equation*}
where
\begin{equation}\label{eq:M}
M_k=\frac{1}{2}( {\overline{\det W_k} }adj(W_k)+\det W_k adj(W_k^*))=H(\det W_k adj(W_k^*))\ .
\end{equation}
Hence, 
\begin{equation}\label{eq:A2}
\lambda_{max}(H(W_k^{-1}))= \frac{1}{|\det W_k|^2}  \lambda_{max}(M_k) \ .
\end{equation}
Multiplying (\ref{eq:A})  by $|\det W_k|^2$ and using (\ref{eq:A2}), the inequality (\ref{eq:main}) follows.

\end{proof}

Note that $\lambda_{max}(M_k)$ is a non-negative function of $s$ and $t$ while $\sigma_1(V_k)$
is a constant.

\medskip

For $k=1$, $Y_1=i\alpha$ where $\alpha=-i\bold u_1^*S(A)\bold u_1 \in\R$, $V_1$ is a vector, and 
$W_1=\delta_1-s+i(\alpha-t)$ is a scalar. Interpreting the adjugate of 
an $1\times 1$ matrix as 1 (to keep $adj(A)A=(\det A) I_n$ that was used in the proof),
using (\ref{eq:23}) and (\ref{eq:M}) we have $M_1=\delta_1-s$ and 
$\lambda_{max}(M_1)=M_1$. 
Moreover, for the column $\bold{y}_1$ of $Y$ in (\ref{eq:22}) we have
 $||S(A)\bold u_1||_2^2=\bold{y}_1^*\bold{y}_1=\alpha^2+V_1^*V_1$ $\Rightarrow$
 $K_1=(\sigma_1(V_1))^2\equiv V_1^*V_1= ||S(A)\bold u_1||_2^2-\alpha^2 $, whereby we obtain 
 the result of Adam and Tsatsomeros  \cite{AT}   
\begin{equation}\label{eq:cub}
[(\delta_1-s)^2+(\alpha-t)^2](s-\delta_2) \le K_1(\delta_1-s)\ .
\end{equation}

\noindent
We use the notation $\Gamma_1(A)$ for the cubic curve obtained from
equality in (\ref{eq:cub}). More generally:
\begin{definition}
$\Gamma_k(A)$ is the curve $|\det W_k|^2 (Re(\lambda)-\delta_{k+1}) = (\sigma_1(V_k))^2
\lambda_{max}(H(\det W_k adj(W_k^*)))$ 
obtained from having equality in  (\ref{eq:main}) of Theorem \ref{NH}.
\end{definition}
\noindent
We use the variables $s$ and $t$ for the curve, where $s+it=\lambda$ in the expression (\ref{eq:23})
for $W_k$.

\smallskip
For $k=2$ we now state an explicit inequality which gives an expression for the curve $\Gamma_2(A)$.
In this case
\begin{equation}\label{eq:W2}
W_2=\Delta_2+Y_2-\lambda I_2=\begin{pmatrix} \delta_1-s+i(\alpha-t) &  -\bar\gamma   \\  
\gamma & \delta_2-s+i(\beta-t)  \end{pmatrix} ,
\end{equation}
where $i\alpha=\bold u_1^*S(A)\bold u_1$, $i\beta=\bold u_2^*S(A)\bold u_2$, 
$\gamma=\bold u_2^*S(A)\bold u_1$, and $V_2\in {\cal  M}_{n-2,2}(\C)$ is such that 
the first two columns of $Y$ in (\ref{eq:22}) are
\begin{equation}\label{eq:V}
{\cal  M}_{n,2}(\C)\ni\begin{pmatrix} Y_2   \\  V_2  \end{pmatrix} =
\begin{pmatrix} i\alpha & -\bar\gamma   \\  \gamma & i\beta \\ \bold v_1 & \bold v_2  \end{pmatrix}=
U^*S(A)\begin{pmatrix} \bold u_1 & \bold u_2  \end{pmatrix}\ .
\end{equation}

\begin{proposition}\label{3H}
   Let $A\in {\cal  M}_n(\C)$ and  $\lambda =s+it$ be an eigenvalue of $A$. Let
   $\delta_1\ge \delta_2\ge \delta_3$ be the three largest eigenvalues of the
   Hermitian part of $A$, and let $\bold u_1$ and $\bold u_2$ be the corresponding normalized
   eigenvectors of $\delta_1$ and $ \delta_2$. Then
\begin{equation*}
[((\delta_1-s)(\delta_2-s)-(\alpha-t)(\beta-t)+|\gamma|^2)^2+((\delta_1-s)(\beta-t)+(\delta_2-s)(\alpha-t))^2]
(s-\delta_3) 
\end{equation*}
\begin{equation}\label{eq:Th3}
\le\  \frac{K_2}{2}[m_1(s,t)+m_3(s,t)+
\sqrt{[m_1(s,t)-m_3(s,t)]^2+4|m_2(s,t)|^2}\,]\ ,
\end{equation}
where
\begin{equation*}
\alpha=-i\bold u_1^*S(A)\bold u_1 \in\R\ ,  \qquad  \beta=-i\bold u_2^*S(A)\bold u_2 \in\R\ , \qquad
\gamma=\bold u_2^*S(A)\bold u_1 \in\C \, \qquad \qquad 
\end{equation*}
\vspace{-7mm}
\begin{eqnarray*}
K_2=\frac{1}{2}[||S(A){\bold u}_1||_2^2+||S(A){\bold u}_2||_2^2-
\alpha^2-\beta^2-2|\gamma |^2 \hskip59mm \\
\qquad\quad  +\sqrt{(||S(A){\bold u}_1||_2^2-||S(A){\bold u}_2||_2^2-\alpha^2+\beta^2)^2+
4|(S(A){\bold u}_2)^*(S(A){\bold u}_1)+ i\gamma(\alpha+\beta)|^2}\ ] \ ,
\end{eqnarray*}
\begin{equation*}
m_1(s,t)=(\delta_1-s)[(\delta_2-s)^2+(\beta-t)^2]+(\delta_2-s)|\gamma|^2\ , \hskip49mm 
\end{equation*}
\begin{equation*}
m_2(s,t)=i\gamma[(\delta_1-s)(\beta-t)+(\delta_2-s)(\alpha-t)]\ , \hskip61mm 
\end{equation*}
\begin{equation*}
m_3(s,t)=(\delta_2-s)[(\delta_1-s)^2+(\alpha-t)^2]+(\delta_1-s)|\gamma|^2\ . \hskip49mm 
\end{equation*}
\end{proposition}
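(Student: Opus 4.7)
The proposition is a direct specialization of Theorem \ref{NH} to the case $k=2$, so the plan is to unpack every ingredient of (\ref{eq:main}) explicitly in terms of the data in (\ref{eq:W2}) and (\ref{eq:V}), and then recognize the result as (\ref{eq:Th3}).

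First I would expand $\det W_2$ from (\ref{eq:W2}) by a direct $2\times 2$ determinant computation, obtaining $\det W_2=[(\delta_1-s)(\delta_2-s)-(\alpha-t)(\beta-t)+|\gamma|^2]+i[(\delta_1-s)(\beta-t)+(\delta_2-s)(\alpha-t)]$, so that $|\det W_2|^2$ is exactly the bracketed factor multiplying $(s-\delta_3)$ on the left-hand side of (\ref{eq:Th3}).

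Next I would compute $M_2=H(\det W_2\cdot adj(W_2^*))$ entry by entry, using the explicit formula for the $2\times 2$ adjugate. Each diagonal entry of $M_2$ equals the real part of $\det W_2$ times the conjugate of a diagonal entry of $W_2$; after expansion these coincide with $m_1(s,t)$ and $m_3(s,t)$. The off-diagonal entry reduces to $\tfrac{1}{2}(\det W_2-\overline{\det W_2})\gamma=i\gamma\,\mathrm{Im}(\det W_2)$, which is $m_2(s,t)$. Applying the standard eigenvalue formula for a $2\times 2$ Hermitian matrix then yields $\lambda_{max}(M_2)=\tfrac{1}{2}(m_1+m_3+\sqrt{(m_1-m_3)^2+4|m_2|^2})$, reproducing the bracketed factor on the right-hand side of (\ref{eq:Th3}).

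Finally I would identify $(\sigma_1(V_2))^2$ with the stated $K_2$. Since $V_2^*V_2$ is $2\times 2$ Hermitian, the same eigenvalue formula applies, so only its three independent entries are needed. From (\ref{eq:V}) the two columns of $\begin{pmatrix}Y_2\\V_2\end{pmatrix}$ are $U^*S(A)\bold u_1$ and $U^*S(A)\bold u_2$, and since $U^*$ is unitary it preserves norms and inner products. Reading off the top $2\times 2$ block from (\ref{eq:V}) and subtracting its contribution gives $\bold v_1^*\bold v_1=\|S(A)\bold u_1\|_2^2-\alpha^2-|\gamma|^2$, $\bold v_2^*\bold v_2=\|S(A)\bold u_2\|_2^2-\beta^2-|\gamma|^2$, and $\bold v_2^*\bold v_1=(S(A)\bold u_2)^*(S(A)\bold u_1)+i\gamma(\alpha+\beta)$. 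Plugging these into the $2\times 2$ Hermitian eigenvalue formula produces exactly the stated $K_2$.

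The main obstacle is purely algebraic bookkeeping: keeping conjugations and signs straight while expanding $\det W_2\cdot adj(W_2^*)$ and, at the final step, being careful to subtract only the contribution of the top $2\times 2$ block of the columns of $Y$ when computing the entries of $V_2^*V_2$. No new idea beyond Theorem \ref{NH} and the closed-form eigenvalues of a $2\times 2$ Hermitian matrix is needed.
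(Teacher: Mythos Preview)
Your proposal is correct and follows essentially the same route as the paper: specialize Theorem~\ref{NH} to $k=2$, expand $\det W_2$ and $M_2=H(\det W_2\,adj(W_2^*))$ from (\ref{eq:W2}) entry by entry, apply the closed-form $2\times 2$ Hermitian eigenvalue formula to get $\lambda_{\max}(M_2)$, and identify $K_2=(\sigma_1(V_2))^2$ by computing the entries of the Gram matrix $V_2^*V_2$ via the unitarity of $U$ and (\ref{eq:V}). The only caveat is a phrasing ambiguity---``the real part of $\det W_2$ times the conjugate of a diagonal entry of $W_2$'' should be read as $\mathrm{Re}\bigl(\det W_2\cdot\overline{(W_2)_{jj}}\bigr)$, not $(\mathrm{Re}\,\det W_2)\cdot\overline{(W_2)_{jj}}$---but your subsequent identifications make clear you mean the former, and the computation then matches $m_1,m_2,m_3$ exactly.
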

\vskip.2cm

\begin{proof}
By (\ref{eq:W2}), we have
\begin{equation*}
\det W_2=(\delta_1-s)(\delta_2-s)-(\alpha-t)(\beta-t)+|\gamma|^2+i((\delta_1-s)(\beta-t)+(\delta_2-s)(\alpha-t))\ ,
\end{equation*}
which gives
\begin{equation}\label{eq:B1}
|\det W_2|^2=((\delta_1-s)(\delta_2-s)-(\alpha-t)(\beta-t)+|\gamma|^2)^2+((\delta_1-s)(\beta-t)+
(\delta_2-s)(\alpha-t))^2 \ .
\end{equation}
Furthermore
\begin{equation*}
adj (W_2^*)=adj \begin{pmatrix} \delta_1-s-i(\alpha-t) &  \bar\gamma   \\  
-\gamma & \delta_2-s-i(\beta-t)  \end{pmatrix} =
\begin{pmatrix} \delta_2-s-i(\beta-t) &  -\bar\gamma   \\  \gamma & \delta_1-s-i(\alpha-t)  \end{pmatrix} ,
\end{equation*}
and a straightforward calculation from (\ref{eq:M}) gives
\begin{eqnarray}\label{eq:B2}
M_2=\begin{pmatrix} m_1(s,t) &  \bar m_2(s,t)   \\  m_2(s,t) & m_3(s,t)  \end{pmatrix} =
H(\det W_2 adj(W_2^*))=\frac{1}{2}( {\overline{\det W_2} }adj(W_2)+\det W_2 adj(W_2^*))= \nonumber \\
\begin{pmatrix} (\delta_1-s)[(\delta_2-s)^2+(\beta-t)^2]+(\delta_2-s)|\gamma|^2 &  
-i\bar\gamma[(\delta_1-s)(\beta-t)+(\delta_2-s)(\alpha-t)]   \\  
i\gamma[(\delta_1-s)(\beta-t)+(\delta_2-s)(\alpha-t)] & 
(\delta_2-s)[(\delta_1-s)^2+(\alpha-t)^2]+(\delta_1-s)|\gamma|^2 \end{pmatrix} \, .\ 
\end{eqnarray}
The largest eigenvalue of $M_2$ is
\begin{equation}\label{eq:B3}
\lambda_{max}(M_2)=\frac{1}{2}(m_1+m_3+\sqrt{(m_1-m_3)^2+4|m_2|^2})\ .
\end{equation}

To calculate $K_2=(\sigma_1(V_2))^2=\lambda_{max}(V_2^*V_2)$,
use that  $V_2=({\bold v}_1\ \,{\bold v}_2)$ by (\ref{eq:V}). Then
$V_2^*V_2$ is the $2\times 2$ Gram matrix
\begin{equation*}
V_2^*V_2=\begin{pmatrix} {\bold v}_1^*{\bold v}_1 &  {\bold v}_1^*{\bold v}_2   \\  
{\bold v}_2^*{\bold v}_1 & {\bold v}_2^*{\bold v}_2  \end{pmatrix}\ ,
\end{equation*}
whose largest eigenvalue is 
\begin{equation}\label{eq:K2}
K_2=\frac{1}{2}[{\bold v}_1^*{\bold v}_1+{\bold v}_2^*{\bold v}_2+
\sqrt{({\bold v}_1^*{\bold v}_1-{\bold v}_2^*{\bold v}_2)^2+4|{\bold v}_2^*{\bold v}_1|^2}\ ]\ .
\end{equation}
Further, for the columns $\bold y_j$  of $Y$ in (\ref{eq:22}) we have 
\begin{equation*}
\bold y_j^*\bold y_k=
(US(A){\bold u}_j)^*(US(A){\bold u}_k)=(S(A){\bold u}_j)^*U^*U(S(A){\bold u}_k)=
(S(A){\bold u}_j)^*(S(A){\bold u}_k) \ .
\end{equation*}
Thus, combining (\ref{eq:V}) with the above equalities we get
\begin{equation*}
\alpha^2+|\gamma |^2+{\bold v}_1^*{\bold v}_1=\bold y_1^*\bold y_1=||S(A){\bold u}_1||_2^2 \  ,\ 
|\gamma |^2+\beta^2+{\bold v}_2^*{\bold v}_2=\bold y_2^*\bold y_2=||S(A){\bold u}_2||_2^2\ ,
\end{equation*}
and 
\begin{equation*}
- i\gamma(\alpha+\beta)+{\bold v}_2^*{\bold v}_1=
\bold y_2^*\bold y_1=(S(A){\bold u}_2)^*(S(A){\bold u}_1)\ .
\end{equation*}
The substitution of these relations in (\ref{eq:K2}) yields
\begin{eqnarray}\label{eq:B4}
K_2=\frac{1}{2}[||S(A){\bold u}_1||_2^2+||S(A){\bold u}_2||_2^2-
\alpha^2-\beta^2-2|\gamma |^2 \hskip57mm \nonumber \\
\qquad +\sqrt{(||S(A){\bold u}_1||_2^2-||S(A){\bold u}_2||_2^2-\alpha^2+\beta^2)^2+
4|(S(A){\bold u}_2)^*(S(A){\bold u}_1)+ i\gamma(\alpha+\beta)|^2}\ ]\ .
\end{eqnarray}

Combining the equations (\ref{eq:B1}) , (\ref{eq:B2}) , (\ref{eq:B3}), (\ref{eq:B4}) and
(\ref{eq:main}), the inequality in (\ref{eq:Th3}) is derived for $k=2$.

\end{proof}


We now state some properties of the curves $\Gamma_k(A)$ of Definition 1,
which for $k=1$ have been proved in \cite{PT1}.

\begin{proposition}\label{Gk}
For the curves $\Gamma_k(A)$ we have: 

\smallskip
(i) $\ \ \, \, \ \Gamma_k(\tilde U^* A\tilde U)=\Gamma_k(A)$ if $\tilde U$ is unitary

\smallskip
(ii) $\ \ \ \Gamma_k(A^T)=\Gamma_k(A)$

\smallskip
(iii) $\ \ \, \Gamma_k(A^*)=\overline{\Gamma_k(A)}$

\smallskip
(iv) $\ \ \ \Gamma_k(rA+bI_n)=r\Gamma_k(A)+b\ $ if $\ 0<r\in\R$ and $b\in\C$

\end{proposition}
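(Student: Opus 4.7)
The plan is to track how the ingredients of $\Gamma_k(A)$---the spectral data $\delta_{k+1}$, the blocks $\Delta_k$, $Y_k$, $V_k$, and the matrix $W_k(\lambda)=\Delta_k+Y_k-\lambda I_k$---transform under each of the four operations, and then read off the transformation of the defining equation $|\det W_k|^2(\Re(\lambda)-\delta_{k+1})=(\sigma_1(V_k))^2\lambda_{max}(H(\det W_k\,adj(W_k^*)))$.

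For (i), writing $H(A)=U\Delta U^*$, one has $H(\tilde U^*A\tilde U)=(\tilde U^*U)\Delta(\tilde U^*U)^*$, a spectral decomposition with the same $\Delta$ and new unitary $U'=\tilde U^*U$. A direct substitution gives $Y'=(U')^*S(\tilde U^*A\tilde U)U'=U^*S(A)U=Y$, so every block in (\ref{eq:21})--(\ref{eq:23}) is unchanged and therefore so is $\Gamma_k$.

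For (ii), I would use the identities $H(A^T)=\overline{H(A)}$ and $S(A^T)=-\overline{S(A)}$. Choosing $\bar{\bold u}_j$ as eigenvectors of $H(A^T)$, the skew-Hermiticity of $Y_k$ gives $Y_k(A^T)=Y_k(A)^T$ and $V_k(A^T)=-\overline{V_k(A)}$, so $W_k(A^T;\lambda)=W_k(A;\lambda)^T$. Since $\det$ and singular values are invariant under transposition and complex conjugation, and since $\det W_k(A^T)\,adj(W_k(A^T)^*)=\bigl(\det W_k(A)\,adj(W_k(A)^*)\bigr)^T$ (using $adj(B^*)^T=\overline{adj(B)}$), its Hermitian part equals $\overline{H(\det W_k(A)\,adj(W_k(A)^*))}$, which has the same (real) spectrum. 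Every term in the defining equation is therefore preserved. For (iii), the identities $H(A^*)=H(A)$ and $S(A^*)=-S(A)$ keep $U,\Delta$ fixed and replace $Y$ by $-Y$; a short calculation then gives $W_k(A^*;\lambda)=W_k(A;\bar\lambda)^*$. Taking the Hermitian adjoint exchanges $\det W\,adj(W^*)$ with $\overline{\det W}\,adj(W)$, so $H(\det W_k\,adj(W_k^*))$ at $\lambda$ for $A^*$ equals the same quantity at $\bar\lambda$ for $A$. Combined with $|\det W_k(A^*;\lambda)|^2=|\det W_k(A;\bar\lambda)|^2$ and $\sigma_1(V_k(A^*))=\sigma_1(V_k(A))$, this shows that $\lambda\in\Gamma_k(A^*)$ iff $\bar\lambda\in\Gamma_k(A)$, i.e.\ $\Gamma_k(A^*)=\overline{\Gamma_k(A)}$.

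For (iv), since $0<r\in\R$ and since scalar multiples of $I_n$ leave eigenvectors invariant, the decompositions transform as $\delta_j\mapsto r\delta_j+\Re(b)$, $Y\mapsto rY+i\Im(b)I_n$, and $V_k\mapsto rV_k$. Substituting gives $W_k(rA+bI_n;\lambda)=rW_k\bigl(A;(\lambda-b)/r\bigr)$. Using $|\det(rW_k)|^2=r^{2k}|\det W_k|^2$, $\det(rW_k)\,adj((rW_k)^*)=r^{2k-1}\det W_k\,adj(W_k^*)$, $(\sigma_1(rV_k))^2=r^2(\sigma_1(V_k))^2$, and $\Re(\lambda)-\delta_{k+1}'=r(\Re(\lambda_0)-\delta_{k+1})$ with $\lambda_0=(\lambda-b)/r$, both sides of the curve equation rescale by the common factor $r^{2k+1}$, proving $\Gamma_k(rA+bI_n)=r\Gamma_k(A)+b$.

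The main obstacle will be the bookkeeping in (ii) and (iii): one must carefully verify that transposition and complex conjugation commute correctly with the $adj$ and $H(\cdot)$ operations in the definition of the curve. Once those identities are in place, the rest of each part reduces to routine substitution.
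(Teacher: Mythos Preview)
Your proposal is correct and follows essentially the same route as the paper: for each of (i)--(iv) you track how $\Delta_k$, $Y_k$, $V_k$, and hence $W_k(\lambda)$, transform, arriving at $W_k^T$, $W_k(A;\bar\lambda)^*$, and $rW_k\bigl(A;(\lambda-b)/r\bigr)$ respectively, with the same $r^{2k+1}$ scaling in (iv). In fact you supply more of the bookkeeping than the paper does---the paper simply asserts that these transformed $W_k$ and $V_k$ ``leave Theorem~\ref{NH} invariant'' without spelling out the $adj$/$H(\cdot)$ identities you check in (ii) and (iii).
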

\vskip.2cm

\begin{proof}
$(i)$ With $\tilde A = \tilde U^* A \tilde U$ we get $H( \tilde A) = \tilde U^* H(A) \tilde U$, 
$S( \tilde A) = \tilde U^* S(A) \tilde U$. Using the notation of (\ref{eq:21}), (\ref{eq:22}), 
(\ref{eq:23}) and Theorem \ref{NH}, we have that $H( \tilde A)$ is diagonalized by $\tilde U^* U\,$: 
$(\tilde U^* U)^*H( \tilde A)(\tilde U^* U)=U^*H(A)U=\Delta$, so $\Delta$ is invariant. Then
$(\tilde U^* U)^*S( \tilde A)(\tilde U^* U)=U^*S(A)U=Y$ is also invariant, which implies that all quantities 
$W_k$, $V_k$, and $\delta_{k+1}$ that are used in Theorem \ref{NH} are unchanged by a unitary
similarity transformation on $A$, hence $(i)$ follows.

$(ii)$ $\sigma(A^T)=\sigma(A)$ and $A^T$ will transform $W_k$ and $V_k$ into $W_k^T$ and $-V_k^*$,
respectively, which leave Theorem \ref{NH} invariant.

$(iii)$ $\sigma(A^*)=\overline{\sigma(A)}$ and $A^*$ will transform  $V_k$ into  $-V_k$ 
but $A^*-\lambda I_n$ gives a $W_k(A^*;\lambda)=\Delta_k-Y_k-\lambda I_k=(\Delta_k+Y_k-\bar\lambda I_k)^*=
(W_k(A;\bar\lambda))^*$ so Theorem \ref{NH} becomes a statement for $\bar\lambda$ if
$\lambda\in\sigma(A^*)$. 

$(iv)$ $A+bI_n$ leaves $V_k$ unchanged and replaces $-\lambda I_k$ by
$(b-\lambda) I_k$ in $W_k$. For $r>0,\ \Gamma_k(rA)=\{(s+it)/r\in\Gamma_k(A)\}=r\Gamma_k(A)$
because both sides of (\ref{eq:main}) in Theorem \ref{NH} then scale as $r^{2k+1}$.
\end{proof}

\smallskip

Next we give some illustrations of Proposition \ref{3H}. In Figure \ref{fig2} we depict the curve 
$\Gamma_2(A)$ defined by having equality in (\ref{eq:Th3}) for three random 
$5\times 5$ complex matrices with elements that have 
real and imaginary parts between -1 and 1, and compare it with the cubic curve 
$\Gamma_1(A)$ defined by having equality in (\ref{eq:cub})  \cite{AT}. 
The eigenvalues  are marked by small boxes. It is usually the case that the new curve is a strict improvement, 
i.e., all points in $\C$ satisfying the inequality for $k=2$ also satisfy the inequality  for $k=1$. 
However, we shall see below that this is not always the case.

\begin{figure}[!ht]
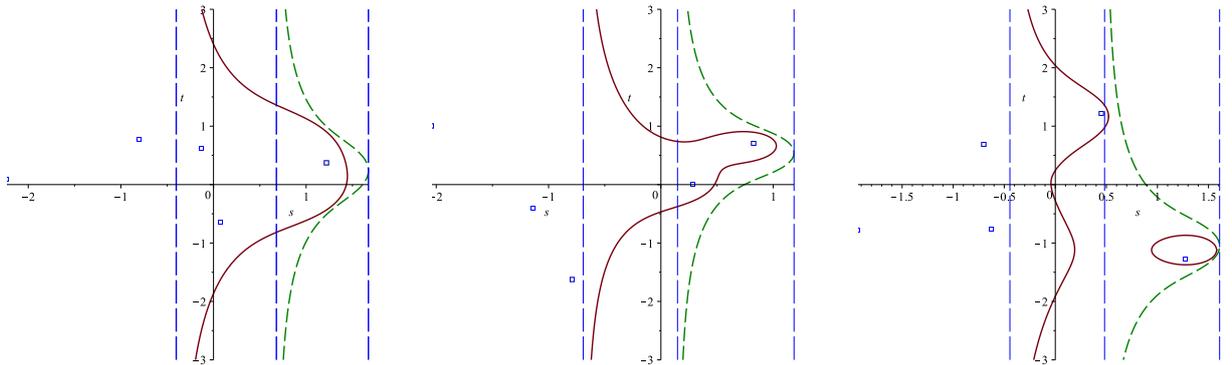

\includepdf[pages=4,pagecommand={},offset=-55mm -61mm,
trim = 25mm 122mm 35mm 22mm,clip,width=55mm]{EVFig.pdf}
\includepdf[pages=5,pagecommand={},offset=1mm -61mm,
trim = 25mm 122mm 35mm 22mm,clip,width=55mm]{EVFig.pdf}
\includepdf[pages=6,pagecommand={},offset=57mm -61mm,
trim = 25mm 122mm 35mm 22mm,clip,width=55mm]{EVFig.pdf}
\vskip35mm
\caption{\label{fig2} The curves $\Gamma_2(A)$ (solid) and $\Gamma_1(A)$ (dashed)
 for three complex $5\times 5$ matrices. The vertical lines are $s=\delta_3$,
$s=\delta_2$ and $s=\delta_1$. }
\end{figure}

In Figure \ref{fig3} we have used six real random $5\times 5$ matrices with elements between -1 and 1 
to show some more possible configurations. From simulations it is clear that for random matrices
the plots of Figure \ref{fig3} appear with decreasing probability. The first plots, with less interesting topology,
appear  more frequently but we shall see how one can construct matrices with all types
of curves that are seen in the  figure. 
In these figures the $k=2$ case is a strict improvement on the $k=1$ case.

\begin{figure}
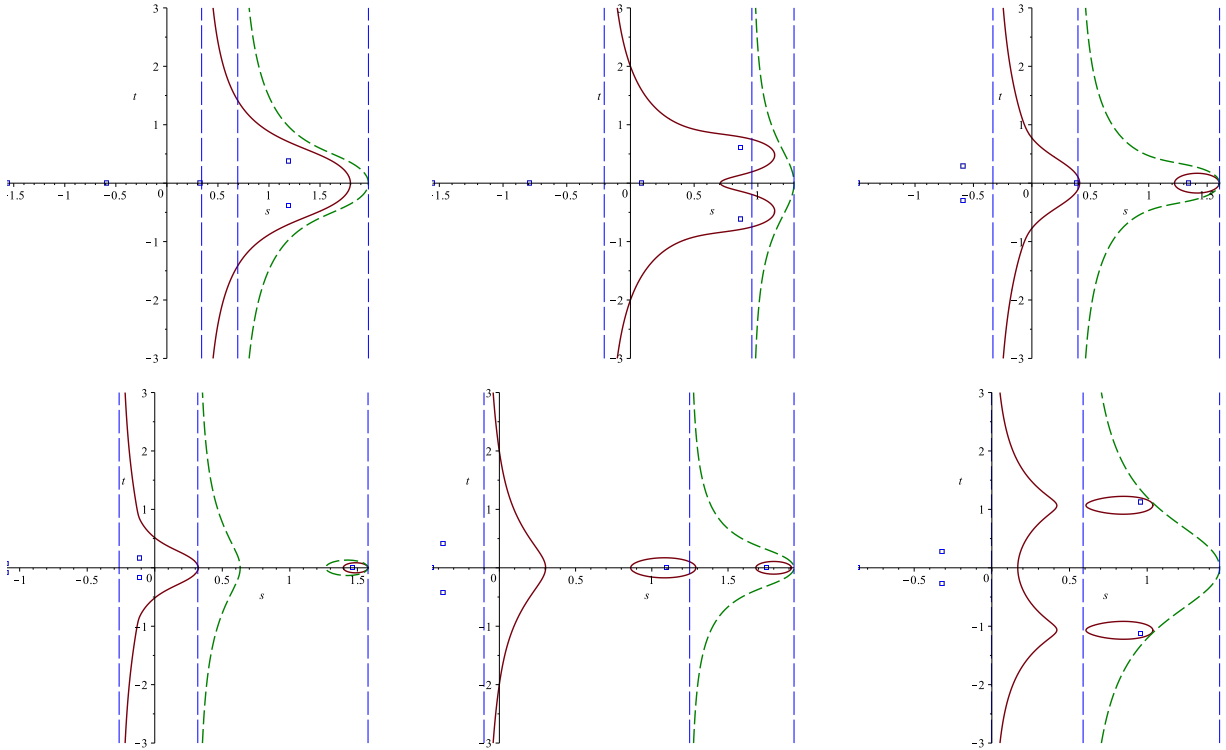

\includepdf[pages=7,pagecommand={},offset=-55mm 80mm,
trim = 25mm 122mm 35mm 22mm,clip,width=55mm]{EVFig.pdf}
\includepdf[pages=8,pagecommand={},offset=1mm 80mm,
trim = 25mm 122mm 35mm 22mm,clip,width=55mm]{EVFig.pdf}
\includepdf[pages=9,pagecommand={},offset=57mm 80mm,
trim = 25mm 122mm 35mm 22mm,clip,width=55mm]{EVFig.pdf}
\includepdf[pages=10,pagecommand={},offset=-55mm 29mm,
trim = 25mm 122mm 35mm 22mm,clip,width=55mm]{EVFig.pdf}
\includepdf[pages=11,pagecommand={},offset=1mm 29mm,
trim = 25mm 122mm 35mm 22mm,clip,width=55mm]{EVFig.pdf}
\includepdf[pages=12,pagecommand={},offset=57mm 29mm,
trim = 25mm 122mm 35mm 22mm,clip,width=55mm]{EVFig.pdf}
\vskip77mm
\caption{\label{fig3} The curves $\Gamma_2(A)$ (solid) and $\Gamma_1(A)$ (dashed) 
for six real $5\times 5$ matrices. The vertical lines are $s=\delta_3$,
$s=\delta_2$ and $s=\delta_1$. }
\end{figure}

It is clear that in these figures $s=\delta_3$ is an asymptote to the curve $\Gamma_2(A)$,
and $s=\delta_2$ is an asymptote to $\Gamma_1(A)$, as stated in \cite{AT}. One can 
generalize this observation and prove it for arbitrary $k$.

\begin{theorem}\label{asy}
The curve $\Gamma_k(A)$ : $|\det W_k|^2 (s-\delta_{k+1}) =
 (\sigma_1(V_k))^2\lambda_{max}(H(\det W_k adj(W_k^*)))\ $
has $s=\delta_{k+1}$ as an asymptote, and there are no points on the curve
with $s<\delta_{k+1}$.
\end{theorem}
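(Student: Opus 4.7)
The plan is to analyze signs and asymptotics of the defining equation
\[
|\det W_k|^2(s-\delta_{k+1})=(\sigma_1(V_k))^2\lambda_{\max}(M_k),\qquad M_k=H(\det W_k\,adj(W_k^*)),
\]
regarded as a real equation in $(s,t)$. Two preliminary facts are used throughout. (a) Whenever $\det W_k\neq 0$ one has $M_k=|\det W_k|^2 H(W_k^{-1})$, so Lemma 1 (whose proof only used $s\leq\delta_1$, not that $\lambda$ is an eigenvalue of $A$) gives $\lambda_{\max}(M_k)\geq 0$ on $\{s\leq\delta_1\}$; when $\det W_k=0$ the scalar factor annihilates $M_k$ and $\lambda_{\max}(M_k)=0$. (b) The spectrum of $\Delta_k+Y_k$ lies in its numerical range, whose real part equals $F(\Delta_k)=[\delta_k,\delta_1]$, so $\det W_k=0$ forces $s\in[\delta_k,\delta_1]$.

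The first assertion (no points with $s<\delta_{k+1}$) follows immediately: on $\{s<\delta_{k+1}\leq\delta_1\}$ the left-hand side is $\leq 0$ while (a) makes the right-hand side $\geq 0$, so equality requires both to vanish; vanishing of the left-hand side forces $\det W_k=0$, which by (b) puts $s$ in $[\delta_k,\delta_1]\subseteq[\delta_{k+1},\delta_1]$, contradicting $s<\delta_{k+1}$.

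For the asymptote I would expand both sides as $|t|\to\infty$. Writing $W_k=B_k-itI_k$ with $B_k=\Delta_k-sI_k+Y_k$, a Neumann series gives
\[
W_k^{-1}=\frac{i}{t}I_k+\frac{B_k}{t^2}+O(1/t^3).
\]
The order-$1/t$ term is purely imaginary and so cancels in $H(W_k^{-1})$, leaving
\[
H(W_k^{-1})=\frac{\Delta_k-sI_k}{t^2}+O(1/t^3),\qquad \lambda_{\max}(H(W_k^{-1}))=\frac{\delta_1-s}{t^2}+O(1/t^3).
\]
Since $|\det W_k|^2=t^{2k}(1+O(1/t))$, dividing the curve equation by $|\det W_k|^2$ (valid for $|t|$ large) yields
\[
s-\delta_{k+1}=(\sigma_1(V_k))^2\,\frac{\delta_1-s}{t^2}+O(1/t^3),
\]
so any sequence $(s_n,t_n)\in\Gamma_k(A)$ with $|t_n|\to\infty$ and $s_n$ bounded must have $s_n\to\delta_{k+1}$. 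Branches along which $|s_n|\to\infty$ are excluded by a separate sign check using the large-$|\lambda|$ asymptotics $\det W_k\,adj(W_k^*)\sim-\lambda|\lambda|^{2(k-1)}I_k$, which gives $\lambda_{\max}(M_k)\sim-s|\lambda|^{2(k-1)}$; the signs or orders of the two sides of the curve equation then fail to match in either direction $s\to\pm\infty$.

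The delicate step is recognizing that the leading $O(1/t)$ term of $W_k^{-1}$ is purely imaginary and drops out of $H(W_k^{-1})$, reducing its size to $O(1/t^2)$. This is what produces the nontrivial balance $t^{2k}(s-\delta_{k+1})\sim(\sigma_1(V_k))^2(\delta_1-s)t^{2k-2}$ and pins $s$ to $\delta_{k+1}$ in the limit; everything else is routine bookkeeping.
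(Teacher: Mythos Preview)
Your proof is correct. For the first claim (no points with $s<\delta_{k+1}$), you and the paper reach the same conclusion by the same sign analysis, though the paper uses the slightly sharper observation that for $s<\delta_{k+1}\le\delta_k$ the matrix $H(W_k)=\mathrm{diag}(\delta_1-s,\dots,\delta_k-s)$ is positive definite (not merely possessing one nonnegative eigenvalue), whence $H(W_k^{-1})$ is positive definite and both $|\det W_k|>0$ and $\lambda_{\max}(M_k)>0$ hold directly; this avoids your separate treatment of the zero set of $\det W_k$ via fact (b). For the asymptote the two approaches are genuinely different and in a sense dual: the paper argues qualitatively that staying on the curve while letting $s\to\delta_{k+1}^+$ forces $|\det W_k|\to\infty$ (since otherwise the right-hand side remains bounded away from zero in the positive-definite regime while the left-hand side tends to zero) and hence $|t|\to\infty$; you instead compute an explicit large-$|t|$ expansion of $H(W_k^{-1})$, obtaining the quantitative relation $s-\delta_{k+1}=(\sigma_1(V_k))^2(\delta_1-s)/t^2+O(1/t^3)$. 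Your route gives more---it pins down the rate of approach and makes the asymptote manifest---at the cost of the Neumann-series bookkeeping and the separate exclusion of branches with $|s|\to\infty$. The paper's argument is terser but less informative.
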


\begin{proof}
For $s<\delta_k$, $H(W_k)=diag(\delta_1-s, \dots , \delta_k-s)$ is positive definite which implies
that $H(W_k^{-1})$ is positive definite \cite{M}. Then $|\det W_k|>0$ and 
$\lambda_{max}(H(\det W_k adj(W_k^*)))>0$, so $s<\delta_{k+1}$ is not possible. Furthermore,
if $V_k\ne 0$ we see that $|\det W_k|\to\infty$ is needed as $s\to\delta_{k+1}^+$ and this implies
$|t|\to\infty$. If $V_k=0$ then $s=\delta_{k+1}$ is the curve $\Gamma_k(A)$ except for isolated points given by
$\det W_k=0$ (and $A$ is unitarily similar to a direct sum of $\Delta_k+Y_k$ and some 
$B\in {\cal  M}_{n-k}(\C)\,$).
\end{proof}



Let $k=2$ again. For $s<\delta_2$, $H(W_2^{-1})$ is positive definite which means that both
eigenvalues of $M_2=H(\det W_2 adj(W_2^*))$ are positive. By (\ref{eq:B2}) and (\ref{eq:B3}),
$2\lambda_{min}(M_2)=m_1+m_3-\sqrt{(m_1-m_3)^2+4|m_2|^2}=
Tr M_2 -\sqrt{(Tr M_2)^2-4\det M_2}$. 
Denote by $\gamma_2(A)$ the curve
\begin{equation}\label{eq:ga}
|\det W_2|^2 (s-\delta_{3}) = (\sigma_1(V_2))^2\lambda_{min}(M_2)\ .
\end{equation}
Then $\gamma_2(A)$ must be located to the left of $\Gamma_2(A)$ but still  to the right of 
$s=\delta_3$, and with $s=\delta_3$ as an asymptote.
With $K_2=(\sigma_1(V_2))^2$  and 
$2\lambda_{max}(M_2)=Tr M_2 +\sqrt{(Tr M_2)^2-4\det M_2}$, the curves $\Gamma_2(A)$ and 
$\gamma_2(A)$ are given by 
$\ 2|\det W_2|^2 (s-\delta_{3}) -K_2\, Tr M_2=\pm K_2\sqrt{(Tr M_2)^2-4\det M_2}\ $, with
the plus sign for $\Gamma_2(A)$ and the minus sign for $\gamma_2(A)$. Squaring implies
\begin{equation}\label{eq:pol}
4|\det W_2|^4 (s-\delta_{3})^2 -4K_2Tr M_2|\det W_2|^2 (s-\delta_{3})=-4K_2^2\det M_2\ ,
\end{equation}
which is a polynomial curve $\Gamma_2(A)\cup \gamma_2(A)$ for $s$ and $t$. 
The  components $\Gamma_2(A)$ and  $\gamma_2(A)$  of (\ref{eq:pol}) connect at points where 
$(Tr M_2)^2=4\det M_2$ and at infinity.

We illustrate the curve in (\ref{eq:pol}) in Figure \ref{fig4} for two random complex $5\times 5$ matrices, 
for which the two components $\Gamma_2(A)$ and $\gamma_2(A)$ are disjoint, and for the matrix
\begin{equation}\label{eq:Ahat}
\hat A=\begin{pmatrix} 2 &  0&0&-1.01  \\  0 &1& 0 &0 \\
0& 0 &0 & -1 \\ 1.01&0&1&0  \end{pmatrix}\ .
\end{equation}
We see that $s=\delta_3$ is an asymptote  for the two components. For $\hat A$
we see that the two components meet at some points and that they are non-smooth there. 
We shall comment more on this in the next section.

\begin{figure}[!ht]
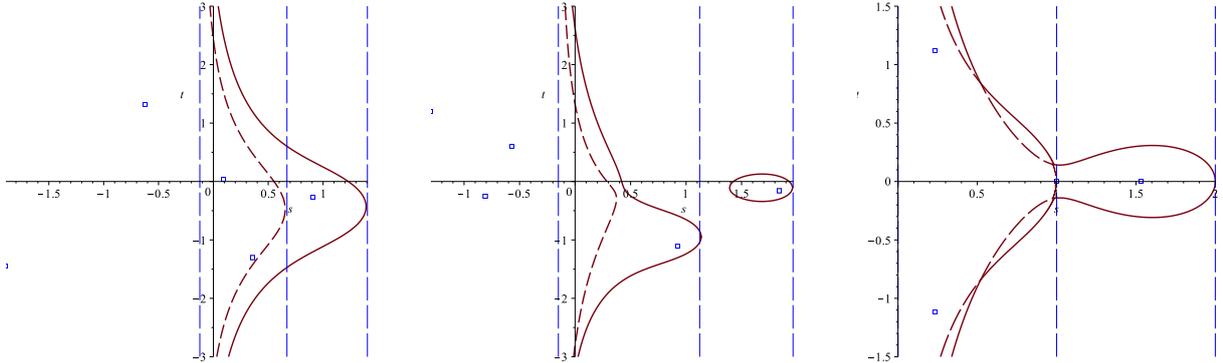

\includepdf[pages=13,pagecommand={},offset=-55mm -11mm,
trim = 25mm 122mm 35mm 22mm,clip,width=55mm]{EVFig.pdf}
\includepdf[pages=14,pagecommand={},offset=1mm -11mm,
trim = 25mm 122mm 35mm 22mm,clip,width=55mm]{EVFig.pdf}
\includepdf[pages=15,pagecommand={},offset=57mm -11mm,
trim = 25mm 122mm 35mm 22mm,clip,width=55mm]{EVFig.pdf}
\vskip36mm
\caption{\label{fig4} The two parts $\Gamma_2(A)$ (solid) and $\gamma_2(A)$ (dashed)
of the curve given by (\ref{eq:pol}), for two random complex $5\times 5$ matrices
(left and center), and for the matrix $\hat A$ in (\ref{eq:Ahat}) (right). 
The vertical lines are $s=\delta_3$, $s=\delta_2$ and $s=\delta_1$.}
\end{figure}

Although in Figures \ref{fig2} and \ref{fig3} the curves $\Gamma_2(A)$  give a strict 
improvement compared with the cubic curves $\Gamma_1(A)$, it is not uncommon 
that for some region in the band $\delta_2<s<\delta_1$, the cubic curve is better.

In Figure \ref{fig5} we show three cases, the first two subfigures refer to a complex and a real 
$5\times 5$ matrix, respectively, 
and the third to the real matrix $\tilde A$ in (\ref{eq:Atilde}),  where in 
some region the cubic curve $\Gamma_1(A)$ gives more restrictions on the spectrum than
$\Gamma_2(A)$. We see that we can even have a closed loop on $\Gamma_1(A)$ without
having a closed loop on $\Gamma_2(A)$.

A general analysis of this situation is complex but we can state some sufficient conditions for the
cubic curve $\Gamma_1(A)$ to be more restrictive. Assume for simplicity that  $A\in {\cal  M}_{n}(\R)$. 
Then the situation will occur
if for some value of $s$ the value of $|t|$ is smaller, or maybe non-existent, on $\Gamma_1(A)$ than 
on $\Gamma_2(A)$, since $s=\delta_2$ is an asymptote for $\Gamma_1(A)$. 

Assuming $A\in {\cal  M}_{n}(\R)$, we have $\alpha=0$ and $K_1=||S(A)\bold u_1||_2^2$, 
so by (\ref{eq:cub}) the cubic curve $\Gamma_1(A)$ becomes
\begin{equation*}
[(s-\delta_1)^2+t^2](s-\delta_2)= K_1(\delta_1-s)\ ,
\end{equation*}
which gives $t^2=K_1(\delta_1-s)/(s-\delta_2)- (s-\delta_1)^2$.
For $k=2$ the corresponding analysis is harder, we therefore choose to compare the values of $t^2$
on $\Gamma_1(A)$ and $\Gamma_2(A)$, denoted by $t_1^2$ and $t_2^2$, respectively,
for the value $\tilde s=(\delta_1+\delta_2)/2$ of $s$, and check when $t_1^2< t_2^2$. 
For $k=1$, on $\Gamma_1(A)$, we get $t_1^2=K_1-(\delta_1-\delta_2)^2/4$.
For $k=2$, on $\Gamma_2(A)$, with $\alpha=0$, $\beta=0$, $\gamma\in\R$ and $s=\tilde s$,
Proposition \ref{3H} gives 
\begin{equation*}
[(\frac{\delta_1-\delta_2}{2})^2+t_2^2-\gamma^2]^2(\frac{\delta_1+\delta_2}{2}-\delta_3)
= \frac{K_2}{2}|(\delta_1-\delta_2)[(\frac{\delta_1-\delta_2}{2})^2+t_2^2-\gamma^2]|\ ,
\end{equation*} 
where 
\begin{equation*}
2K_2=||S(A)\bold u_1||_2^2+||S(A)\bold u_2||_2^2-2\gamma^2+
\sqrt{(||S(A)\bold u_1||_2^2-||S(A)\bold u_2||_2^2)^2+4((S(A)\bold u_2)^*S(A)\bold u_1)^2}\ .
\end{equation*}

\begin{figure}
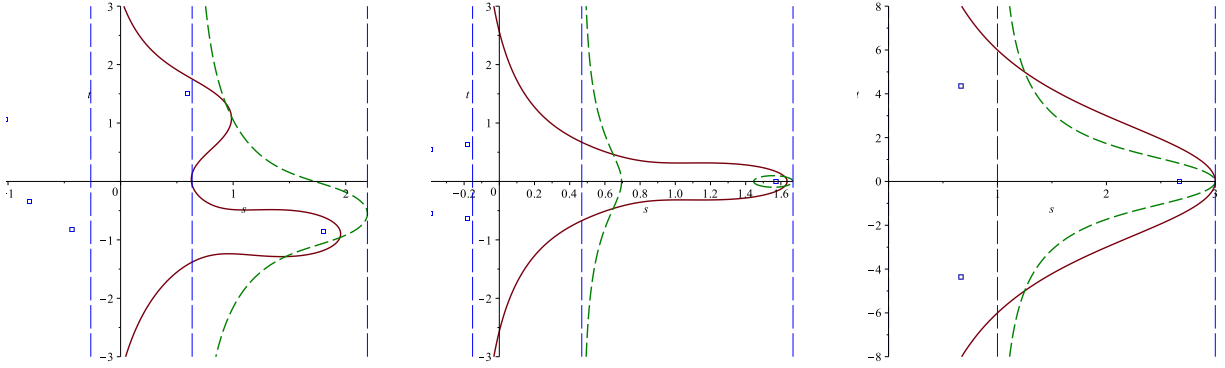

\includepdf[pages=16,pagecommand={},offset=-55mm 85mm,
trim = 25mm 122mm 35mm 22mm,clip,width=55mm]{EVFig.pdf}
\includepdf[pages=17,pagecommand={},offset=1mm 85mm,
trim = 25mm 122mm 35mm 22mm,clip,width=55mm]{EVFig.pdf}
\includepdf[pages=18,pagecommand={},offset=57mm 85mm,
trim = 25mm 122mm 35mm 22mm,clip,width=55mm]{EVFig.pdf}
\vskip35mm
\caption{\label{fig5} The curves $\Gamma_2(A)$ (solid) and $\Gamma_1(A)$ (dashed)
for a complex $5\times 5$ matrix (left), a real $5\times 5$ matrix (center), and the matrix 
$\tilde A$ in (\ref{eq:Atilde}) (right). The vertical lines are $s=\delta_3$,
$s=\delta_2$ and $s=\delta_1$.}
\end{figure}

This implies that $t_2^2=\gamma^2-(\frac{\delta_1-\delta_2}{2})^2$ or
$t_2^2=\gamma^2-(\frac{\delta_1-\delta_2}{2})^2\pm K_2(\delta_1-\delta_2)/(\delta_1+\delta_2-2\delta_3)$.
Since by (\ref{eq:V}), $K_1=\gamma^2+||\bold v_1||_2^2\ge\gamma^2$, we have 
$t_1^2\ge \gamma^2-(\frac{\delta_1-\delta_2}{2})^2\ge
 \gamma^2-(\frac{\delta_1-\delta_2}{2})^2-K_2(\delta_1-\delta_2)/(\delta_1+\delta_2-2\delta_3)$.
The question is therefore when $t_1^2< \gamma^2-(\frac{\delta_1-\delta_2}{2})^2+K_2(\delta_1-\delta_2)/(\delta_1+\delta_2-2\delta_3)$ is possible. Since $t_1^2=K_1-(\frac{\delta_1-\delta_2}{2})^2$, we need to have
$K_1< \gamma^2+K_2(\delta_1-\delta_2)/(\delta_1+\delta_2-2\delta_3)$.
Expressed in terms of the vectors $\bold v_1$ and $\bold v_2$ of (\ref{eq:V}), and using (\ref{eq:K2}), 
this inequality is
\begin{equation}\label{eq:cross}
||\bold v_1||_2^2<K_2\cdot \frac{\delta_1-\delta_2}{\delta_1+\delta_2-2\delta_3}\ ; \ 
2K_2=||\bold v_1||_2^2+||\bold v_2||_2^2+\sqrt{(||\bold v_1||_2^2-||\bold v_2||_2^2)^2+4(\bold v_2^*\bold v_1)^2}\ .
\end{equation}
If the vectors $\bold v_1$ and $\bold v_2$ are orthogonal, $K_2=max\{||\bold v_1||_2^2,||\bold v_2||_2^2\}$, if they are
parallel, $K_2=||\bold v_1||_2^2+||\bold v_2||_2^2$. Also, $0<(\delta_1-\delta_2)/(\delta_1+\delta_2-2\delta_3)=
(\delta_1-\delta_2)/(\delta_1-\delta_2+2(\delta_2-\delta_3))<1$ and closer to 1 if $\delta_2-\delta_3$
is small compared to $\delta_1-\delta_2$. Thus matrices with $||\bold v_1||_2$ small compared to 
$||\bold v_2||_2$ and $\delta_2-\delta_3$ small compared to $\delta_1-\delta_2$ will have the desired property.

The $3\times 3$ matrix
\begin{equation}\label{eq:Atilde}
\tilde A=\begin{pmatrix} 3 &  0& -2  \\  
0& 1 & -4 \\ 2&4&0  \end{pmatrix}
\end{equation}
has $\delta_1=3$, $\delta_2=1$, $\delta_3=0$, $\bold v_1,\bold v_2\in\R$, $||\bold v_1||_2=2$, 
$||\bold v_2||_2=4$, $\bold v_2^*\bold v_1= 8$, $K_1=4$, and $K_2=20$. We get $\tilde s=2$
and $t_1^2=3<9=t_2^2$, 
and we see the curves in the last subfigure of Figure \ref{fig5}.

In these cases, one may of course combine $\Gamma_1(A)$ and $\Gamma_2(A)$
and use the intersection of the two regions to minimize the region for the spectrum. 
We also emphasize that the condition (\ref{eq:cross}) is sufficient but not necessary for
the two curves to cross since we assumed $A$ real and $s=\tilde s=(\delta_1+\delta_2)/2$.


\section{Topologically interesting examples}

To analyze the curves $\Gamma_k(A)$ for larger values of $k$ is in general hard, 
the complicated dependence of $\lambda_{max}(H(\det W_k adj(W_k^*)))\ $ 
on $s$ and $t$ being a main cause. Each element of $adj(W_k^*)$ is a $(k-1)\times (k-1)$ minor of $W_k^*$.
If we assume that we have a matrix $A$ such that $W_k$ is diagonal, then the curves
are easier to analyze and we can already in this case see interesting types of behavior
of $\Gamma_k(A)$  that can occur. Using the notation of (\ref{eq:21}) and (\ref{eq:22}),
we therefore assume that $A$ has the form

\begin{equation}\label{eq:AWdiag}
A=\begin{pmatrix} \Delta_k &  -V_k^*  \\  V_k & \tilde A_k  \end{pmatrix}\ ,
\end{equation}
where $\Delta_k=diag(\delta_{1}, \dots ,\delta_{k})\in {\cal  M}_{k}(\R)$ and 
$\tilde A_k=\tilde \Delta_k+\tilde Y_k$. This means that in (\ref{eq:21}) and (\ref{eq:22})
we have assumed $Y_k=0$ and $U=I_n$. Then, by (\ref{eq:23}),
$W_k=\Delta_k-\lambda I_k$ is diagonal with 
\begin{equation*}
\det W_k=\prod_{r=1}^{k}(\delta_r-\lambda)\ .
\end{equation*}
We get
\begin{eqnarray*}
\det W_k adj(W_k^*)=\det W_k adj[diag(\delta_{1}-\bar\lambda, \dots ,\delta_{k}-\bar\lambda)] 
\hskip28mm\\
=\det W_k diag(\prod_{r=2}^{k}(\delta_{r}-\bar\lambda), \dots , \prod_{r=1}^{k-1}(\delta_{r}-\bar\lambda)) 
\hskip20mm\\
=diag((\delta_{1}-\lambda)\prod_{r=2}^{k}|\delta_{r}-\lambda|^2, \dots , 
(\delta_{k}-\lambda)\prod_{r=1}^{k-1}|\delta_{r}-\lambda|^2) \ .
\end{eqnarray*}
With $\lambda=s+it$, the Hermitian part is
\begin{equation*}
M_k=H(\det W_k adj(W_k^*))=diag((\delta_{1}-s)\prod_{r=2}^{k}|\delta_{r}-\lambda|^2, \dots , 
(\delta_{k}-s)\prod_{r=1}^{k-1}|\delta_{r}-\lambda|^2)\ .
\end{equation*}
The eigenvalues of $M_k$ are its diagonal elements and we need to determine the largest
eigenvalue. Suppose that $j<i$, so $\delta_j\ge\delta_i$, and compare the corresponding
diagonal elements $(M_k)_{jj}$ and $(M_k)_{ii}$ of $M_k$. We have
$(\delta_{j}-s)\prod_{r\ne j}|\delta_{r}-\lambda|^2\ge
 (\delta_{i}-s)\prod_{r\ne i}|\delta_{r}-\lambda|^2\ $ if 
 $(\delta_{j}-s)|\delta_{i}-\lambda|^2\ge(\delta_{i}-s)|\delta_{j}-\lambda|^2\ $. This means
 $ (\delta_{j}-s)((\delta_{i}-s)^2+t^2)\ge(\delta_{i}-s)((\delta_{j}-s)^2+t^2)$, or,
 equivalently $(\delta_{j}-s)(\delta_{i}-s)\le t^2$.
 Writing it as
 \begin{equation*}
(s-\frac{\delta_j+\delta_i}{2})^2-t^2\le(\frac{\delta_j-\delta_i}{2})^2\ ,
\end{equation*}
we see that equality is attained at the hyperbola with centerpoint at
$(\frac{\delta_j+\delta_i}{2})^2$ on the real axis, passing through the real axis at
$\delta_j$ and $\delta_i$, and with asymptotes of slope $\pm 1$.

Suppose we are in the region $\delta_{k+1}\le s\le \delta_1$. No components bending in
the same direction (left or right) of all the hyperbolas for all pairs $j,i$ will cross each other since
they have the same asymptotic slope. Of all hyperbolas formed from $\delta_1$ and
$\delta_j$, $j=2, \dots , k$, it is the left component of the one formed from $\delta_1$ and
$\delta_2$ which gives the left boundary of the region where the first diagonal element $(M_k)_{11}$ of $M_k$
equals $\lambda_{max}(M_k)$. Next, to the left of this curve, the second diagonal element $(M_k)_{22}$
will be $\lambda_{max}(M_k)$ until we reach the left component  of
 the hyperbola formed from $\delta_2$ and $\delta_3$ and so on. We illustrate the
 situation in Figure \ref{fig6}. In the first subfigure we show all six hyperbolas in a case with
 $k=3$, $\delta_1=5$, $\delta_2=3.5$, $\delta_3=1$ and $\delta_4=0$. The hyperbolas
 related to $(\delta_1,\delta_2)$, $(\delta_1,\delta_3)$ and $(\delta_1,\delta_4)$ are solid, 
 the ones of $(\delta_2,\delta_3)$ and $(\delta_2,\delta_4)$ dashed, and the one of
 $(\delta_3,\delta_4)$ dotted.
 In the second subfigure, we have $k=4$, $\delta_1=5$, $\delta_2=3.5$, $\delta_3=3$,
 $\delta_4=1$  and $\delta_5=0$, and we show those components of all 10 hyperbolas
 that separate four regions of different diagonal elements of $M_4$ being 
 $\lambda_{max}(M_4)$ for $\delta_5\le s \le\delta_1$.

 \begin{figure}[!ht]
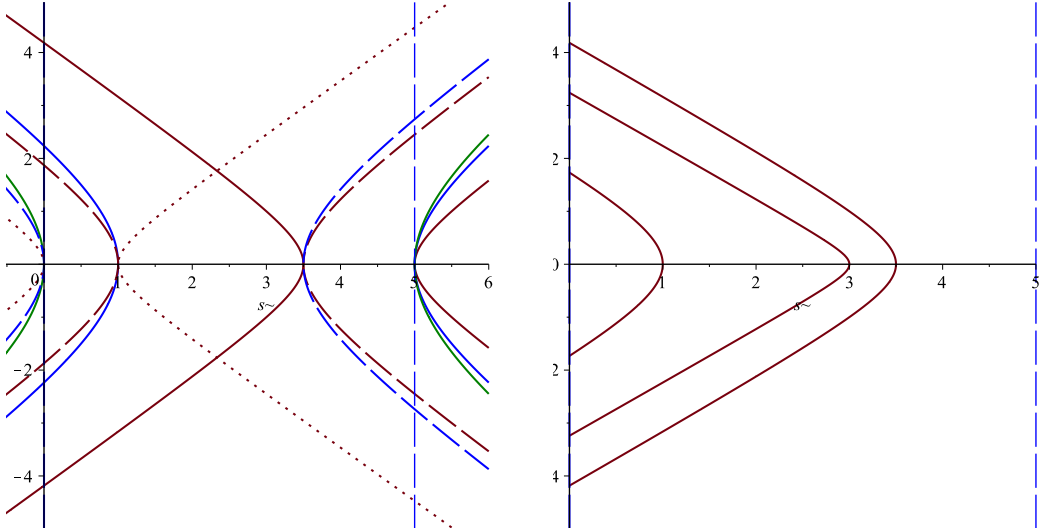

 \includepdf[pages=19,pagecommand={},offset=-30mm 5mm,
trim = 35mm 122mm 35mm 22mm,clip,width=75mm]{EVFig.pdf}
\includepdf[pages=20,pagecommand={},offset=42mm 5mm,
trim = 35mm 122mm 35mm 22mm,clip,width=75mm]{EVFig.pdf}
\vskip62mm
\caption{\label{fig6} All six hyperbolas for a $k=3$ case (left) and the three region separating
hyperbola components for a $k=4$ case (right).}
\end{figure}
 
 With $\sigma_1(V_k)=\varepsilon$, the curve $\Gamma_k(A)$ of Theorem \ref{NH} in 
 each region becomes
\begin{equation*}
\prod_{r=1}^{k}|\delta_r-\lambda|^2(s-\delta_{k+1})=
\varepsilon^2(\delta_j-s)\prod_{r=1, r\ne j}^{k}|\delta_{r}-\lambda|^2\ ,
\end{equation*}
which is $|\delta_j-\lambda|^2(s-\delta_{k+1})=\varepsilon^2(\delta_j-s)$ or
\begin{equation}\label{eq:eqdiag}
[(\delta_j-s)^2+t^2](s-\delta_{k+1})=\varepsilon^2(\delta_j-s) \ ,
\end{equation}
which is a curve of degree 3.
Thus, our entire curve $\Gamma_k(A)$ consists of different cubic curves which are connected in a 
continuous but maybe non-smooth way at the hyperbolas that separates the regions.
Note that no details of $V_k$ or $\tilde A_k$ in (\ref{eq:AWdiag}) are needed to construct the curve.

Putting $t=0$ in (\ref{eq:eqdiag}), we observe that $s=\delta_j$ or
\begin{equation*}
s=s_{\pm}=\frac{\delta_j+\delta_{k+1}}{2}\pm\sqrt{(\frac{\delta_j-\delta_{k+1}}{2})^2-
\varepsilon^2}\ .
\end{equation*}
For $1\le j\le k-1$, the value $s_{+}$ is between $\delta_{j+1}$ and $\delta_j$ if 
$s_{+}\ge\delta_{j+1}$, which happens if $\varepsilon\le\varepsilon_j$, where
\begin{equation}\label{eq:eps}
\varepsilon_j=\sqrt{(\delta_{j+1}-\delta_{k+1})(\delta_{j}-\delta_{j+1})}\ .
\end{equation}
Note that $\varepsilon_j \le ((\delta_{j+1}-\delta_{k+1})+(\delta_{j}-\delta_{j+1}))/2
=(\delta_j-\delta_{k+1})/2$ so $s_{\pm}$ exist (real) for $\varepsilon\le\varepsilon_j$.
Then there will be a closed loop passing through $s=s_{+}$ and $s=\delta_j$.
If also $s_{-}\ge\delta_{j+1}$ (which could only happen if $\delta_{j+1}\le
(\delta_{j}+\delta_{k+1})/2$), then $s_{-}$ will be on the unbounded
component of  $\Gamma_k(A)$ (and any loops to the left have already merged with 
the unbounded component).

In the region most to the left, where $s\le\delta_k$, for 
$\varepsilon<\varepsilon_k=(\delta_k-\delta_{k+1})/2$ we have 
$s_{\pm}>\delta_{k+1}$ and there is a loop through $\delta_k$ and $s_{+}$ while
the unbounded component  of $\Gamma_k(A)$ passes through $s=s_{-}$. 
For $\varepsilon=\varepsilon_k$ the loop
connects to the unbounded component. For $\varepsilon>\varepsilon_k$ there is
only one component,  it is unbounded and passes through $s=\delta_k$.

In general, because of the different hyperbolas, the unbounded component of $\Gamma_k(A)$ 
passes through several regions as $|t|$ grows and $s\to\delta_{k+1}^{+}$, so it may 
have points where it is not smooth. 

An example of this is provided by the matrix $\hat A$ in (\ref{eq:Ahat}). 
In the last subfigure of Figure \ref{fig4} we see the non-smoothness of $\Gamma_2(\hat A)$ 
which occurs at 
points of the hyperbola $(s-3/2)^2+t^2=1/4$. We have $\varepsilon=1.01$ and the four crossings 
where $\lambda_{min}(M_2)=\lambda_{max}(M_2)$, so $\Gamma_2(\hat A)$ and $\gamma_2(\hat A)$
of (\ref{eq:ga}) meet, are readily calculated to have coordinates
$s=s_{1,2}=(3\pm\sqrt{9-8\varepsilon^2})/4\approx 0.75\pm 0.229$, each with two
corresponding $t$-values $t=\pm\sqrt{s_{1,2}^2-3s_{1,2}+2}$. 
For  $s\in\  [0,0.521]\  \cup\  [0.979,2] $ the diagonal element
$(M_2)_{11}=(2-s)[(1-s)^2+t^2]$ of $M_2$ is its largest eigenvalue, and for  
$s\in\  [0.521,0.979] $,  $(M_2)_{22}=(1-s)[(2-s)^2+t^2]$ is the largest,
and these non-smooth changes take place at the crossing with the hyperbola.

\medskip

With the above relations between $\delta_1,\dots, \delta_{k+1}$ and 
$\varepsilon_1,\dots, \varepsilon_k$, one can construct matrices that give curves
$\Gamma_k(A)$ with desired topologies.

As a first example, let $k=2$ and define two matrices
\begin{equation}\label{eq:AoB}
A=\begin{pmatrix} 1.36 &  0&0&-\varepsilon/2  \\  0 &1& -\varepsilon &0 \\
0& \varepsilon &0 & -0.25 \\ \varepsilon/2&0&0.25&0  \end{pmatrix},\ 
B=\begin{pmatrix} 1.16 &  0&0&-\varepsilon/2  \\  0 &1& -\varepsilon &0 \\
0& \varepsilon &0 & -0.25 \\ \varepsilon/2&0&0.25&0  \end{pmatrix}
\end{equation}
Both have $\delta_3=0$, $\delta_2=1$, and $\sigma_1(V_2)=\varepsilon$. Then 
$\varepsilon_2=(\delta_2-\delta_{3})/2=0.5$, and, by (\ref{eq:eps}), 
$\varepsilon_1=\sqrt{\delta_1-1}$.
For $A$, $\varepsilon_1=0.6$, and for $B$, $\varepsilon_1=0.4$. For small
$\varepsilon$, both matrices will have curves $\Gamma_2(A)$, $\Gamma_2(B)$ with 
two closed loops enclosing one eigenvalue each.
For $A$, when $\varepsilon$ increases, first at $\varepsilon=0.5$ the left loop
merges with the unbounded component, and then at $\varepsilon=0.6$ the remaining
 loop merges with the unbounded component.
For $B$, when $\varepsilon$ increases, first at $\varepsilon=0.4$ the two loops 
merge into one that encloses two eigenvalues, and at $\varepsilon=0.5$ this loop
merges with the unbounded component.
 In Figure \ref{fig7} we show first $\Gamma_2(A)$ for A for $\varepsilon=0.45, 0.55, 0.65$ and
 below $\Gamma_2(B)$ for $B$ for $\varepsilon=0.35, 0.45, 0.55$.
 The eigenvalues are marked by small boxes.
 In the last plot of $A$ and in the last two plots of $B$, one sees clearly the non-smooth
 points on the curves which are located at the crossings with the left-bending component of the 
 hyperbola that passes through $s=1, \, t=0$.

For larger values of $k$, one can also choose values of $\delta_1,\dots ,\delta_{k+1}$
such that the mergers of neighboring loops of $\Gamma_k(A)$ come in any desired order.
If we want all mergers to occur for the same $\varepsilon$, i.e., if
$\varepsilon_1=\varepsilon_2=\dots =\varepsilon_k=(\delta_k-\delta_{k+1})/2$, we can also 
obtain this. By (\ref{eq:eps}) this will happen if, given $\delta_{k+1}$ and $\delta_k$,
we choose iteratively for decreasing $j=k,\dots, 2$, $\delta_{j-1}=\delta_j+
(\delta_k-\delta_{k+1})^2/(4(\delta_j-\delta_{k+1}))$. 
As an example, we take $k=4$, $\delta_5=0$ and $\delta_4=1$. Then
we get $\delta_3=5/4$, $\delta_2=29/20$ and $\delta_1=941/580$,
and the simultaneous merger is at $\varepsilon=1/2$. Define a matrix
\begin{equation}\label{eq:MC}
C=\begin{pmatrix} 941/580 &  0&0&0 &0&-\varepsilon/\sqrt{2} \\  0 &29/20& 0&0&-\varepsilon/\sqrt{2} &0 \\
0&0&5/4&0&0&-\varepsilon/\sqrt{2} \\ 0&0&0&1&-\varepsilon/\sqrt{2}&0 \\
0&\varepsilon/\sqrt{2}&0& \varepsilon/\sqrt{2} &0 & -0.25 \\ 
\varepsilon/\sqrt{2}&0&\varepsilon/\sqrt{2}&0&0.25&0  \end{pmatrix},
\end{equation}
which, since $V_4\in {\cal  M}_{2,4}(\R)$ has $\sigma_1(V_4)=\varepsilon$, has the desired properties. 
In Figure \ref{fig8} we depict the curve $\Gamma_4(C)$ of $C$ for $\varepsilon=0.45, 0.5, 0.55$.
In the last subfigure we see the non-smooth points on the curve located at the crossings with the hyperbolas.

\begin{figure}
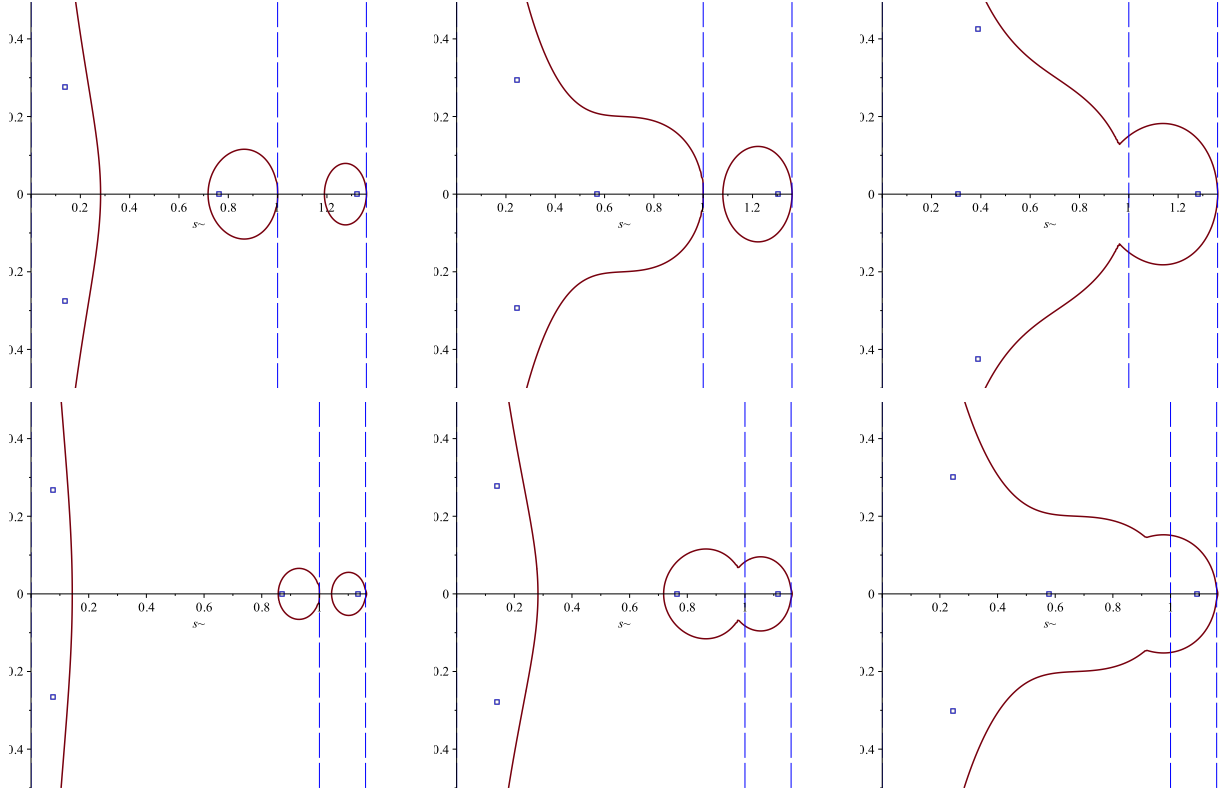

\includepdf[pages=21,pagecommand={},offset=-55mm 85mm,
trim = 35mm 122mm 35mm 22mm,clip,width=55mm]{EVFig.pdf}
\includepdf[pages=22,pagecommand={},offset=1mm 85mm,
trim = 35mm 122mm 35mm 22mm,clip,width=55mm]{EVFig.pdf}
\includepdf[pages=23,pagecommand={},offset=57mm 85mm,
trim = 35mm 122mm 35mm 22mm,clip,width=55mm]{EVFig.pdf}
\includepdf[pages=24,pagecommand={},offset=-55mm 32mm,
trim = 35mm 122mm 35mm 22mm,clip,width=55mm]{EVFig.pdf}
\includepdf[pages=25,pagecommand={},offset=1mm 32mm,
trim = 35mm 122mm 35mm 22mm,clip,width=55mm]{EVFig.pdf}
\includepdf[pages=26,pagecommand={},offset=57mm 32mm,
trim = 35mm 122mm 35mm 22mm,clip,width=55mm]{EVFig.pdf}
\vskip77mm
\caption{\label{fig7} The curves $\Gamma_2(A)$ (top) and $\Gamma_2(B)$ (bottom) for the 
matrices $A$ and $B$ in  (\ref{eq:AoB}) for increasing values of $\varepsilon$. }
\end{figure}

\medskip

Finally we demonstrate an interesting possibility. Let $k=2$ and
\begin{equation}\label{eq:MF}
F(\varepsilon_1 , \varepsilon_2)=
\begin{pmatrix} 5 &  -\varepsilon_2&0&0  \\  \varepsilon_2 &5& -\varepsilon_1 &0 \\
0& \varepsilon_1 &0 & -1 \\ 0&0&1&0  \end{pmatrix}\ .
\end{equation}

\newpage

\begin{figure}[h]
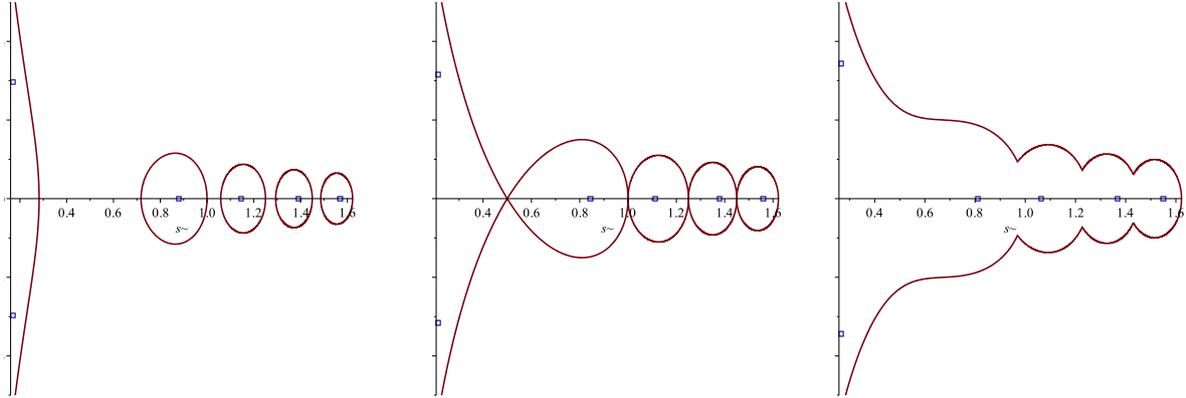

\includepdf[pages=27,pagecommand={},offset=-52mm 85mm,
trim = 40mm 122mm 33mm 22mm,clip,width=55mm]{EVFig.pdf}
\includepdf[pages=28,pagecommand={},offset=4mm 85mm,
trim = 40mm 122mm 33mm 22mm,clip,width=55mm]{EVFig.pdf}
\includepdf[pages=29,pagecommand={},offset=57mm 85mm,
trim = 40mm 122mm 33mm 22mm,clip,width=55mm]{EVFig.pdf}
\vskip40mm
\caption{\label{fig8} The curves $\Gamma_4(C)$ for the matrix $C$ in (\ref{eq:MC}) for increasing values 
of $\varepsilon$. }
\end{figure}

We start with $F(2.48,1.0)$  which has a pair of complex conjugated eigenvalues, 
each inside a loop of $\Gamma_2(F(2.48,1.0))$. If we decrease $\varepsilon_2$, the two loops will merge and form a new loop enclosing two eigenvalues. 
If instead we increase $\varepsilon_1$, each loop
will connect to the unbounded component first.
One can balance the parameters and obtain a situation where the loops approach each other and 
the unbounded component in such a way that an inner loop is formed that encloses a domain where 
no eigenvalue can be located. This is illustrated in Figure \ref{fig9} where the matrices used are
$F(2.48,1.0)$, $F(2.48,0.66)$, $F(2.52,1.0)$ and $F(2.52,0.66)$. On the top row the curves
$\Gamma_2(F(\varepsilon_1 , \varepsilon_2))$ are illustrated, and on the bottom row
the curves  $\gamma_2(F(\varepsilon_1 , \varepsilon_2))$ of (\ref{eq:ga}), obtained by 
using $\lambda_{min}(M_2)$, are added (dashed). The two curves together form a smooth 
algebraic curve given by (\ref{eq:pol}), but break up at non-smooth points for each component. 
We observe that the curve $\gamma_2(A)$ in (\ref{eq:ga}) may be non-connected and 
have a closed loop, compare with Figure \ref{fig4}.

\begin{figure}[b]
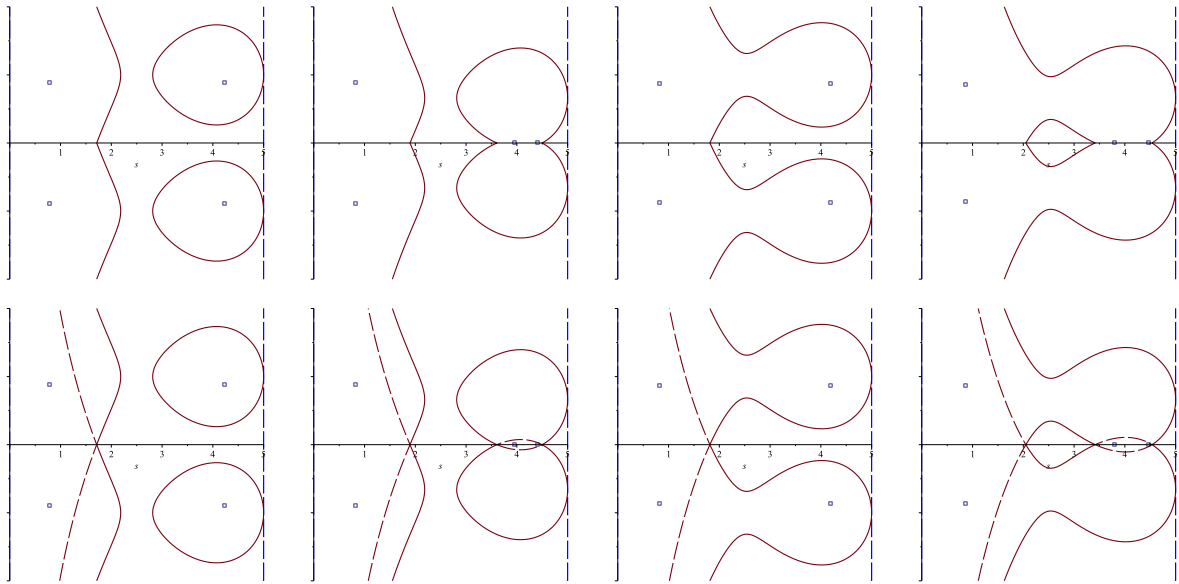

\includepdf[pages=30,pagecommand={},offset=-58mm -32mm,
trim = 35mm 122mm 35mm 22mm,clip,width=40mm]{EVFig.pdf}
\includepdf[pages=31,pagecommand={},offset=-18mm -32mm,
trim = 35mm 122mm 35mm 22mm,clip,width=40mm]{EVFig.pdf}
\includepdf[pages=32,pagecommand={},offset=22mm -32mm,
trim = 35mm 122mm 35mm 22mm,clip,width=40mm]{EVFig.pdf}
\includepdf[pages=33,pagecommand={},offset=62mm -32mm,
trim = 35mm 122mm 35mm 22mm,clip,width=40mm]{EVFig.pdf}
\includepdf[pages=34,pagecommand={},offset=-58mm -72mm,
trim = 35mm 122mm 35mm 22mm,clip,width=40mm]{EVFig.pdf}
\includepdf[pages=35,pagecommand={},offset=-18mm -72mm,
trim = 35mm 122mm 35mm 22mm,clip,width=40mm]{EVFig.pdf}
\includepdf[pages=36,pagecommand={},offset=22mm -72mm,
trim = 35mm 122mm 35mm 22mm,clip,width=40mm]{EVFig.pdf}
\includepdf[pages=37,pagecommand={},offset=62mm -72mm,
trim = 35mm 122mm 35mm 22mm,clip,width=40mm]{EVFig.pdf}
\vskip0mm
\caption{\label{fig9} The curves $\Gamma_2(F(\varepsilon_1 , \varepsilon_2))$ 
for the matrices $F(2.48,1.0)$, $F(2.48,0.66)$, 
$F(2.52,1.0)$ and $F(2.52,0.66)$ in (\ref{eq:MF}) (first row). 
For $F(2.52,0.66)$, no eigenvalue can be inside the closed loop. On the second row of plots,
the curves $\gamma_2(F(\varepsilon_1 , \varepsilon_2))$ are added (dashed).}
\end{figure}

\vskip3cm
\newpage

\section{Envelopes for the spectrum}

In the same way as for the lines used to bound the numerical range $F(A)$  \cite{HJ2,J} 
or the cubic curves $\Gamma_1(A)$ by Adam, Psarrakos and Tsatsomeros \cite{AT,PT1,PT2}, 
we can apply our theorem to
$e^{i\theta}A$ and obtain a curve that bounds $\sigma(e^{i\theta}A)$. Rotating that curve
by $e^{-i\theta}$, we get a curve that bounds $\sigma(A)$. Doing this for all
$\theta\in [0,2\pi[$ we get an infinite intersection of regions that contains $\sigma(A)$, and
whose boundary is the envelope of such curves.
\medskip

For a given matrix $A$, let $E_k(e^{i\theta}A)$ be all points $\lambda\in\C$ that satisfy
\begin{equation*}
|\det W_k|^2 (Re(\lambda)-\delta_{k+1}) \le (\sigma_1(V_k))^2\lambda_{max}(H(\det W_k adj(W_k^*)))\ ,
\end{equation*}
where $W_k=W_k(\theta)$, $V_k=V_k(\theta)$ and $\delta_{k+1}=\delta_{k+1}(\theta)$ are 
constructed from $e^{i\theta}A$ as in Section 2.

\medskip

We denote the region that is the intersection over all $\theta\in [0,2\pi[$ by $\mathcal{E}_k(A)$:
\begin{definition}
${\displaystyle{\mathcal{E}_k(A)=\bigcap_{\theta\in [0,2\pi[}e^{-i\theta}E_k(e^{i\theta}A)}}$
\end{definition}

The region $\mathcal{E}_1(A)$ was studied in detail in \cite{PT1,PT2} and we generalize
some of their results.
Recall that the $\ell $-rank numerical range of $A$ is 
$\Lambda_\ell(A) = \{\mu\in\C; X^*AX=\mu I_\ell, X \in {\cal  M}_{n,\ell}(\C), X^*X=I_\ell \} =
\bigcap_{\theta\in [0,2\pi[} e^{-i\theta}\{s+it; s\le\delta_\ell(e^{i\theta}A)\}$,
and that in general  $\sigma(A)\not\subset \Lambda_\ell(A)$ if $\ell \ge 2$  \cite{PT2}.

\begin{proposition}\label{Ek}
Let $A\in {\cal  M}_{n}(\C)$. For the regions $\mathcal{E}_k(A)$ of the complex plane the following hold: 

\smallskip
(i) $\ \ \ \sigma(A)\subset  \mathcal{E}_k(A)$

\smallskip
(ii) $\ \ \mathcal{E}_k(\tilde U^* A\tilde U)=\mathcal{E}_k(A)$ if $\tilde U$ is unitary

\smallskip
(iii) $\ \, \mathcal{E}_k(A^T)=\mathcal{E}_k(A)$

\smallskip
(iv) $\ \ \mathcal{E}_k(A^*)=\overline{\mathcal{E}_k(A)}$

\smallskip
(v) $\ \ \ \mathcal{E}_k(aA+bI_n)=a\mathcal{E}_k(A)+b\ $ if  $a,b\in\C$

\smallskip
(vi) $\ \ \Lambda_{k+1}(A)\subset  \mathcal{E}_k(A)$, where $\Lambda_{k+1}(A)$ is the
$(k+1)$-rank numerical range of $A$

\end{proposition}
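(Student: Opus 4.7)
The plan is to deduce (i) and (vi) directly from the inequality of Theorem \ref{NH}, and to get (ii)--(v) by observing that the proofs of Proposition \ref{Gk} actually preserve the defining inequality of $E_k$ (not just its boundary curve $\Gamma_k$) and then intersecting over $\theta$.

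For (i), each $\lambda \in \sigma(A)$ gives $e^{i\theta}\lambda \in \sigma(e^{i\theta}A)$, which Theorem \ref{NH} places in $E_k(e^{i\theta}A)$; thus $\lambda \in e^{-i\theta}E_k(e^{i\theta}A)$ for every $\theta$, and so $\lambda \in \mathcal{E}_k(A)$.

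For (ii), (iii) and (iv), the transformations of $W_k$, $V_k$ and $\delta_{k+1}$ established in the proof of Proposition \ref{Gk} leave the inequality of Theorem \ref{NH} invariant (only the quantities $|\det W_k|^2$, $(\sigma_1(V_k))^2$, $\lambda_{max}(H(\det W_k\, adj(W_k^*)))$ and $\delta_{k+1}$ enter), so they show respectively that $E_k(e^{i\theta}\tilde U^*A\tilde U)=E_k(e^{i\theta}A)$, $E_k(e^{i\theta}A^T)=E_k(e^{i\theta}A)$, and $E_k(e^{i\theta}A^*)=\overline{E_k(e^{-i\theta}A)}$ (the latter using the identity $e^{i\theta}A^*=(e^{-i\theta}A)^*$). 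Taking intersections gives (ii) and (iii) directly, and (iv) after the substitution $\theta\mapsto -\theta$ in the full-period intersection. For (v), write $a=re^{i\phi}$ with $r\ge 0$. Extending Proposition \ref{Gk}(iv) to the inequality yields $E_k(rA'+cI_n)=rE_k(A')+c$ for any $r\ge 0$, $c\in\C$ and $A'$; applying this with $A'=e^{i(\theta+\phi)}A$ and $c=e^{i\theta}b$, and then shifting $\theta\mapsto\theta-\phi$ in the intersection over $[0,2\pi[$, produces $\mathcal{E}_k(aA+bI_n)=a\mathcal{E}_k(A)+b$. The edge case $a=0$ reduces to $\mathcal{E}_k(bI_n)=\{b\}$, which is immediate since $V_k=0$ then collapses each $E_k(e^{i\theta}bI_n)$ to a half-plane $Re(\lambda)\le Re(e^{i\theta}b)$ and rotating back and intersecting leaves only $\{b\}$.

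For (vi), I invoke the alternative characterization of $\Lambda_{k+1}(A)$ stated just before the proposition: $\mu\in\Lambda_{k+1}(A)$ iff $Re(e^{i\theta}\mu)\le\delta_{k+1}(e^{i\theta}A)$ for every $\theta$. Setting $\lambda=e^{i\theta}\mu$ in the defining inequality of $E_k(e^{i\theta}A)$ then makes the left-hand side $|\det W_k|^2(Re(\lambda)-\delta_{k+1})$ non-positive, while the right-hand side $(\sigma_1(V_k))^2\lambda_{max}(H(\det W_k\, adj(W_k^*)))$ is non-negative by Theorem \ref{NH}. Hence the inequality holds automatically, so $\mu\in e^{-i\theta}E_k(e^{i\theta}A)$ for every $\theta$, i.e., $\mu\in\mathcal{E}_k(A)$. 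The only real bookkeeping to be careful about is the change of variable in (v), where the phase in $a=re^{i\phi}$ must be absorbed into the parameter $\theta$ of the envelope; no deeper obstacle is present, and the whole proof is essentially a matter of lifting Proposition \ref{Gk} from curves to sub-level regions and then intersecting.
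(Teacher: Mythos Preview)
Your proof is correct and follows essentially the same line as the paper: (i) from Theorem~\ref{NH} applied to each rotation, (ii)--(v) by lifting Proposition~\ref{Gk} from the boundary curves to the sub-level regions $E_k$ and then intersecting over $\theta$ (with the phase of $a$ absorbed into the rotation parameter, just as the paper remarks), and (vi) by showing the half-plane $Re(\lambda)\le\delta_{k+1}$ already lies in $E_k$. The only cosmetic difference is that for (vi) the paper routes the half-plane inclusion through Theorem~\ref{asy}, while you verify directly that the left side of the defining inequality is non-positive and the right side non-negative; this is the same underlying observation (and your version is arguably cleaner, since it checks the inequality itself rather than inferring it from the location of the equality curve).
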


\begin{proof}
Statement $(i)$ is clear from above, and $(ii)$ - $(v)$ follow directly from 
Proposition \ref{Gk}. $a\in\C$ is allowed since we use all rotations $e^{i\theta}A$ to 
define $\mathcal{E}_k(A)$.
By Theorem \ref{asy}, $\Gamma_k(e^{i\theta}A)\subset 
\{s+it; s\ge\delta_{k+1}(e^{i\theta}A)\}$ which implies 
$\{s+it; s\le\delta_{k+1}(e^{i\theta}A)\}\subset E_k(e^{i\theta}A)$.
Thus $\Lambda_{k+1}(A)=\bigcap_{\theta\in [0,2\pi[} e^{-i\theta}\{s+it; s\le\delta_{k+1}(e^{i\theta}A)\}
\subset \bigcap_{\theta\in [0,2\pi[}e^{-i\theta}E_k(e^{i\theta}A)= \mathcal{E}_k(A)$
which proves $(vi)$.
\end{proof}

We now give several illustrations of $\mathcal{E}_2(A)$. All plots are made using 120 curves, 
separated by 3 degrees,
that is, $\theta= 2\pi m/120, m=0, \dots ,119$.
In Figure  \ref{fig1}  the regions $F(A)$, $\mathcal{E}_1(A)$ and
$\mathcal{E}_2(A)$  for the Toeplitz matrix in (\ref{eq:Toep}) are illustrated. This matrix was used as an example
in \cite{PT1} to illustrate $\mathcal{E}_1(A)$. 

\medskip

On the top row of  Figure \ref{fig17} we show $F(A_1)$, $\mathcal{E}_1(A_1)$ and 
$\mathcal{E}_2(A_1)$ for the matrix
\begin{equation}\label{eq:A1}
A_1=\begin{pmatrix} 14+19i &  -4-i&-55-13i&-32+13i  \\  27+2i &14-25i& 64 &72 \\
54+i& 47-3i &14+44i & -32-42i \\ 76&73&4-2i&-11+24i  \end{pmatrix}
\end{equation}
that was used as an example in \cite{PT2}. Here we observe that $\mathcal{E}_1(A_1)$  isolates one 
and $\mathcal{E}_2(A_1)$ two of the eigenvalues, which are marked by small boxes.

On the middle row of Figure \ref{fig17} are the corresponding regions for the $11\times 11$ Frank 
matrix $A_2$ used in \cite{PT1} 
to illustrate $\mathcal{E}_1(A_2)$. Frank matrices have elements 
$A_{ij}=0$ if $j\le i-2$, $A_{ij}=n+1-i$ if $j=i-1$,
and $A_{ij}=n+1-j$ if $j\ge i$. They have determinant 1 but are ill-conditioned. 

On the bottom row of Figure \ref{fig17} are the corresponding regions for a real random 
$5\times 5$ matrix  $A_3$ with elements
between -1 and 1, and 5 real eigenvalues, $\mathcal{E}_1(A_3)$  isolates two of them 
and $\mathcal{E}_2(A_3)$ three.

\vskip2mm

\begin{figure}[!ht]
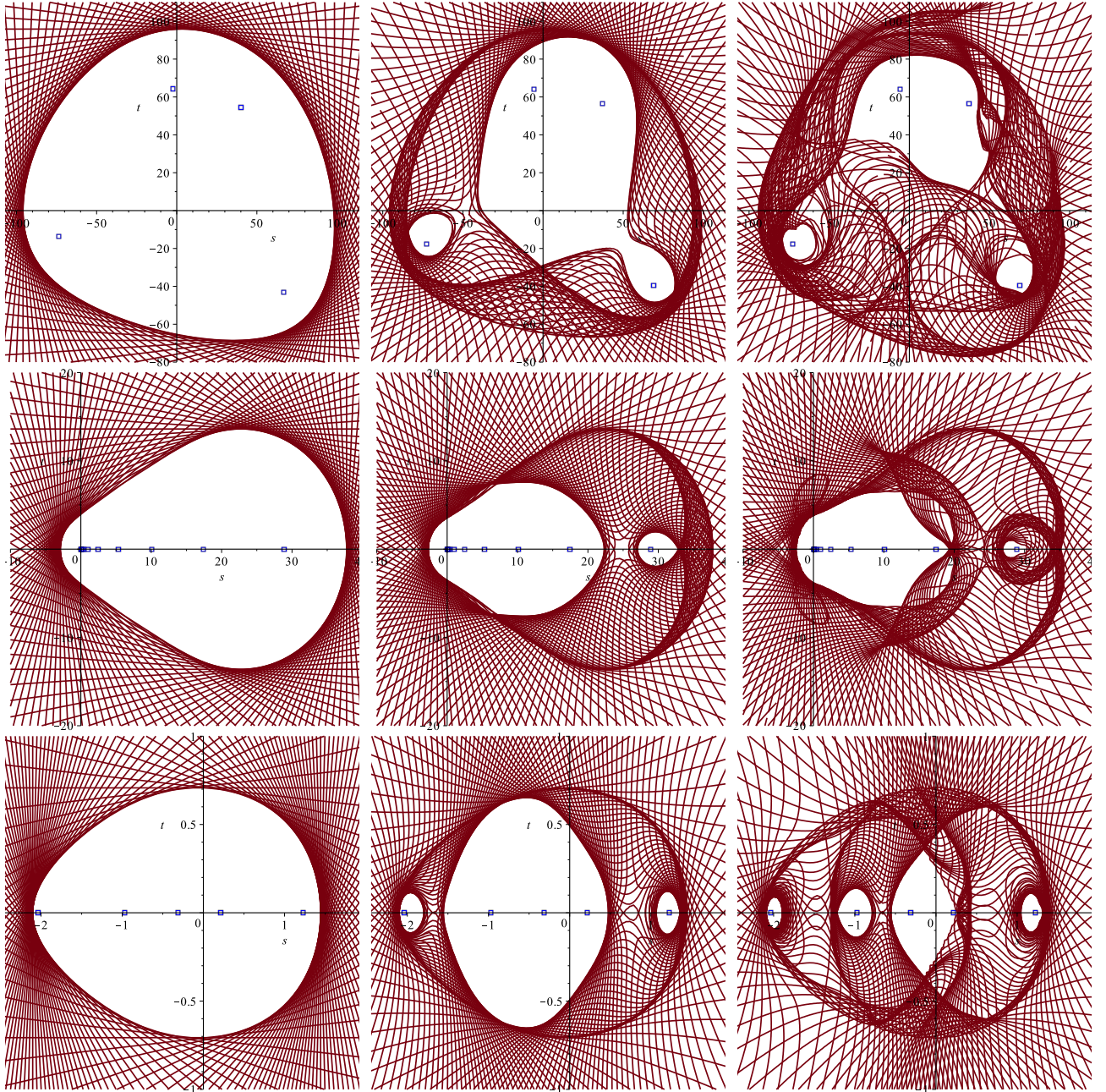

\vskip0cm
\includepdf[pages=38,pagecommand={},offset=-55mm 50mm,
trim = 25mm 120mm 60mm 20mm,clip,width=55mm]{EVFig.pdf}
\includepdf[pages=39,pagecommand={},offset=2mm 50mm,
trim = 25mm 120mm 60mm 20mm,clip,width=55mm]{EVFig.pdf}
\includepdf[pages=40,pagecommand={},offset=59mm 50mm,
trim = 25mm 120mm 60mm 20mm,clip,width=55mm]{EVFig.pdf}
\includepdf[pages=41,pagecommand={},offset=-55mm  -7mm,
trim = 25mm 120mm 60mm 20mm,clip,width=55mm]{EVFig.pdf}
\includepdf[pages=42,pagecommand={},offset=2mm  -7mm,
trim = 25mm 120mm 60mm 20mm,clip,width=55mm]{EVFig.pdf}
\includepdf[pages=43,pagecommand={},offset=59mm  -7mm,
trim = 25mm 120mm 60mm 20mm,clip,width=55mm]{EVFig.pdf}
\includepdf[pages=44,pagecommand={},offset=-55mm -64mm,
trim = 25mm 120mm 60mm 20mm,clip,width=55mm]{EVFig.pdf}
\includepdf[pages=45,pagecommand={},offset=2mm -64mm,
trim = 25mm 120mm 60mm 20mm,clip,width=55mm]{EVFig.pdf}
\includepdf[pages=46,pagecommand={},offset=59mm -64mm,
trim = 25mm 120mm 60mm 20mm,clip,width=55mm]{EVFig.pdf}
\vskip125mm
\caption{\label{fig17}The numerical range $F(A)$ (left), and the regions $\mathcal{E}_1(A)$ 
(center) and $\mathcal{E}_2(A)$ (right) of: 
 the  matrix $A_1$ in (\ref{eq:A1}) (top), 
 the $11\times 11$ Frank matrix $A_2$ in \cite{PT1} (middle), 
and  a real random  $5\times 5$  matrix $A_3$ (bottom).}
\vskip0.0cm
\end{figure}

\newpage

Finally, we can observe from simulations that $\mathcal{E}_1(A)$ can exclude regions of
the complex plane not excluded by  $\mathcal{E}_2(A)$, 
$\mathcal{E}_2(A)\not\subset\mathcal{E}_1(A)$, so the single curve property shown in Figure 
\ref{fig5} can be noticed also for the envelopes. In Figure \ref{fig13} we illustrate this for a complex 
random $5\times 5$ matrix  $A_4$  with real and imaginary parts of the elements between -1 and 1.
The cubic envelope $\mathcal{E}_1(A_4)$ successfully  isolates one of the eigenvalues,
marked by small boxes, but not $\mathcal{E}_2(A_4)$.
In the fourth subfigure we observe how the intersection $\mathcal{E}_1(A_4)\cap\mathcal{E}_2(A_4)$
of both regions determines a new smaller region for the spectrum.

\begin{figure}[!ht]
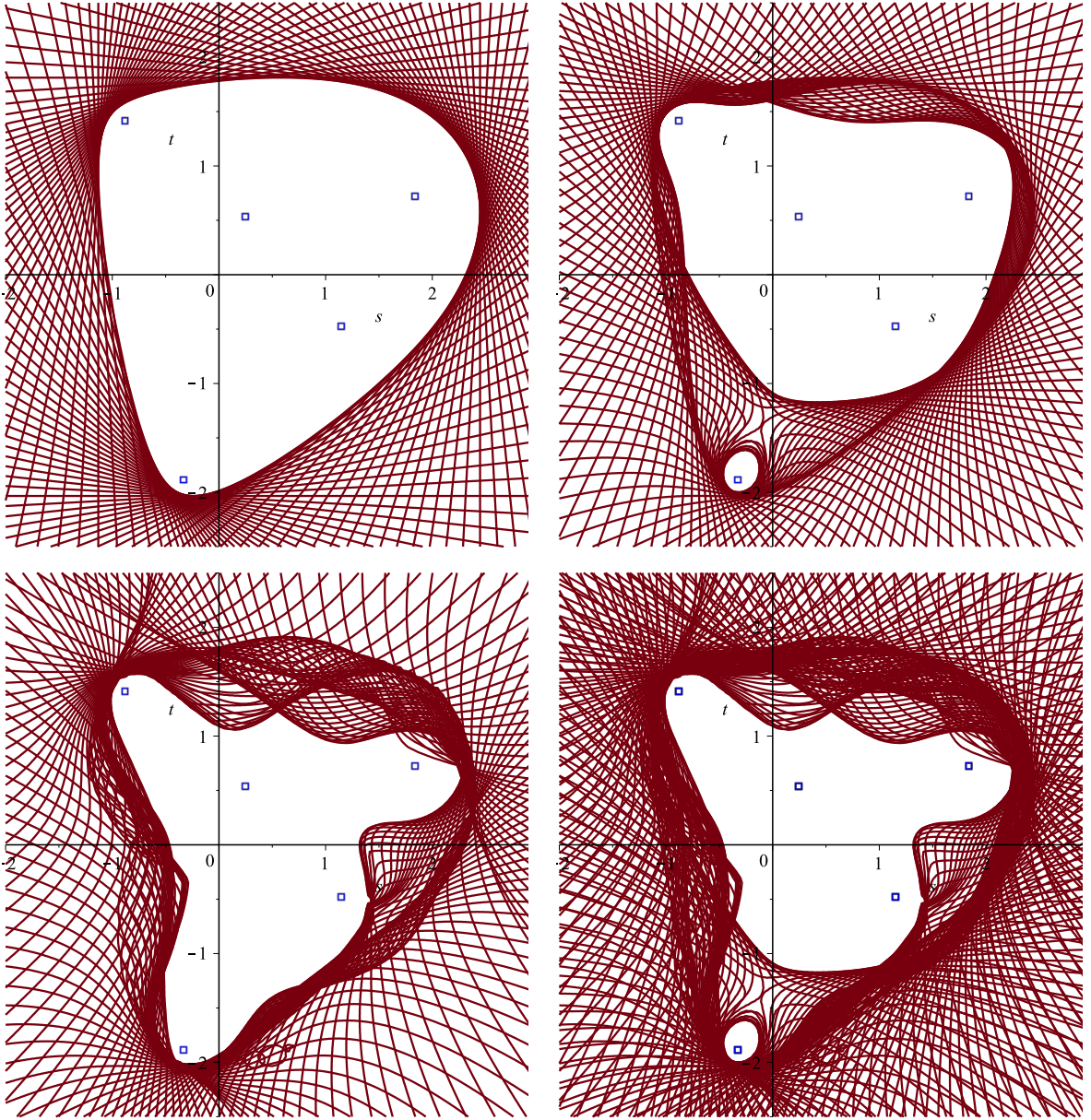

\includepdf[pages=47,pagecommand={},offset=-37mm 31mm,
trim = 25mm 120mm 60mm 20mm,clip,width=75mm]{EVFig.pdf}
\includepdf[pages=48,pagecommand={},offset=42mm 31mm,
trim = 25mm 120mm 60mm 20mm,clip,width=75mm]{EVFig.pdf}
\includepdf[pages=49,pagecommand={},offset=-37mm -51mm,
trim = 25mm 120mm 60mm 20mm,clip,width=75mm]{EVFig.pdf}
\includepdf[pages=50,pagecommand={},offset=42mm -51mm,
trim = 25mm 120mm 60mm 20mm,clip,width=75mm]{EVFig.pdf}
\vskip150mm
\caption{\label{fig13}The numerical range $F(A_4)$ (top left),  the regions $\mathcal{E}_1(A_4)$
(top right), $\mathcal{E}_2(A_4)$ (bottom left), and the intersection 
$\mathcal{E}_1(A_4)\cap\mathcal{E}_2(A_4)$ (bottom right)
 of a complex random  $5\times 5$  matrix $A_4$. }
\end{figure}


\newpage

\section{Open problems and discussion}

Adam, Psarrakos and Tsatsomeros \cite{AT,PT1,PT2} have proven several properties of
$\Gamma_1(A)$ and $\mathcal{E}_1(A)$ that need to be studied for $k\ge 2$. 

They proved that
when there is a closed loop of $\Gamma_1(A)$, then there is precisely one simple eigenvalue 
 inside. We have seen that we can construct matrices where $\Gamma_k(A)$ has a closed 
 loop enclosing $k$ eigenvalues, or, more generally for any $1\le j\le k$, $j$ closed loops 
 enclosing $n_1,\dots,n_j$ eigenvalues  respectively, for any positive $n_1,\dots,n_j$
 with  $j\le n_1+\dots + n_j \le k$.  It would be desirable to have a theorem giving a 
 more precise description of possible  topologies of $\Gamma_k(A)$.

The important result that $\mathcal{E}_1(A)$ is always compact is proved 
in \cite{PT2}, and it is likely that this should hold also for $\mathcal{E}_k(A)$ for $k\ge 2$.
The study of normal matrices or normal eigenvalues \cite{PT1,PT2} should also be generalized.

We have seen that $\Gamma_2(A)$ is not always more restricting for the spectrum than
$\Gamma_1(A)$ everywhere in the complex plane, and that  $\mathcal{E}_2(A)$ is not 
necessarily a subset of $\mathcal{E}_1(A)$. These facts need to be further studied
and relations between different $\Gamma_k(A)$ or different $\mathcal{E}_k(A)$ need to be explored.

The level of improvement for increasing $k$, i.e., the reduction in size of $\mathcal{E}_k(A)$
which depends on the separation of $\delta_1$, $\delta_2$,  $\delta_3, \dots$ might be
possible to quantify. For large complex random matrices with most eigenvalues within some bound,
the improvement will be smaller than for certain types of more structured matrices.

The computational complexity increases fast with increasing $k$, especially for the envelope
$\mathcal{E}_k(A)$. It might be possible to find less restrictive inequalities for the spectrum, 
but which produce curves and regions that are easier to analyze and construct numerically.






\end{document}